\documentclass[11pt]{article}
\usepackage[margin=1in]{geometry} 
\usepackage[cmex10]{amsmath} 
\usepackage{amsthm}
\ifx\UseOption\undefined
\def\UseOption{tr}
\fi
\usepackage{optional}

\usepackage{amssymb}
\usepackage{algorithm}
\usepackage{algorithmic}
\usepackage{color}
\usepackage{mathtools}
\usepackage{enumitem} 
\usepackage{hyperref}
\theoremstyle{plain}
\newtheorem{theorem}{Theorem}
\newtheorem{proposition}{Proposition}
\newtheorem{lemma}{Lemma}
\newtheorem{corollary}{Corollary}
\newtheorem{claim}{Claim}

\theoremstyle{definition}

\theoremstyle{remark}
\newtheorem{remark}{Remark}

\def\ba#1\ea{\begin{align*}#1\end{align*}}
\def\ban#1\ean{\begin{align}#1\end{align}}


\allowdisplaybreaks 
\interdisplaylinepenalty=2500 


\def\argmax{\operatornamewithlimits{arg\,max}} 
\newcommand{\argmin}{\mathop{\mathrm{arg\min}}}

\newcommand{\bE}{\ensuremath{\mathbb{E}} }
\newcommand{\bP}{\ensuremath{\mathbb{P}} }
\newcommand{\bR}{\ensuremath{\mathbb{R}} }
\newcommand{\bZ}{\ensuremath{\mathbb{Z}} }
\newcommand{\bEq}[1]{ \mathbb{E} \left[\left. #1\right|\bf{q}(t)=\bf{q}\right]  }

\newcommand{\vq}{\ensuremath{{\bf q}} }
\newcommand{\bvq}{\ensuremath{\overline{{\bf q}}} } 
\newcommand{\va}{\ensuremath{{\bf a}} }
\newcommand{\bva}{\ensuremath{\overline{{\bf a}}} } 
\newcommand{\vs}{\ensuremath{{\bf s}} }
\newcommand{\vu}{\ensuremath{{\bf u}} }
\newcommand{\vlam}{\ensuremath{{\mbox{\boldmath{$\lambda$}}}} } 
\newcommand{\vsig}{\ensuremath{{\mbox{\boldmath{$\sigma$}}}} } 
\newcommand{\vnu}{\ensuremath{{\mbox{\boldmath{$\nu$}}}} } 
\newcommand{\vqt}{\ensuremath{{\bf q}(t)} }

\newcommand{\vx}{\ensuremath{{\bf x}} }
\newcommand{\vy}{\ensuremath{{\bf y}} }


\newcommand{\vei}{\ensuremath{{\bf e}^{(i)}} }
\newcommand{\vetj}{\ensuremath{\widetilde{{\bf e}}^{(j)}} }
\newcommand{\ei}{\ensuremath{e^{(i)}} }
\newcommand{\etj}{\ensuremath{\widetilde{ e}^{(j)}} }

\newcommand{\vone}{\ensuremath{{\bf 1}} } 

\newcommand{\qij}{\ensuremath{q_{ij}} }
\newcommand{\bqij}{\ensuremath{\overline{q}_{ij}} } 
\newcommand{\qijt}{\ensuremath{q_{ij}(t)} }
\newcommand{\bq}{\ensuremath{\overline{q}} }
\newcommand{\aij}{\ensuremath{a_{ij}} }
\newcommand{\baij}{\ensuremath{\overline{a}_{ij}} } 
\newcommand{\sij}{\ensuremath{s_{ij}} }
\newcommand{\uij}{\ensuremath{u_{ij}} }
\newcommand{\lij}{\ensuremath{_{ij}} }
\newcommand{\para}{\ensuremath{_{\parallel}} }
\newcommand{\per}{\ensuremath{_{\perp}} }
\newcommand{\paraij}{\ensuremath{_{\parallel ij}} }
\newcommand{\perij}{\ensuremath{_{\perp ij}} }
\newcommand{\se}{\ensuremath{^{(\epsilon)}} }
\newcommand{\numin}{\ensuremath{\nu_{\min}} }
\newcommand{\rt}{\ensuremath{\widetilde{r}} }

\newcommand{\cC}{\ensuremath{\mathcal{C}} }
\newcommand{\cS}{\ensuremath{\mathcal{S}} }
\newcommand{\cK}{\ensuremath{\mathcal{K}} }
\newcommand{\cF}{\ensuremath{\mathcal{F}} }
\newcommand{\cKo}{\ensuremath{\mathcal{K}^{\circ}} }

\newcommand{\La}{\left \langle }
\newcommand{\Ra}{\right \rangle}

\newcommand{\V}{\ensuremath{V} }
\newcommand{\Vq}{\ensuremath{V(\vq)} }
\newcommand{\Vi}{\ensuremath{V_1} }
\newcommand{\Viq}{\ensuremath{V_1(\vq)} }
\newcommand{\Vo}{\ensuremath{V_2} }
\newcommand{\Voq}{\ensuremath{V_2(\vq)} }
\newcommand{\Vt}{\ensuremath{V_3} }
\newcommand{\Vtq}{\ensuremath{V_3(\vq)} }
\newcommand{\W}{\ensuremath{W} }
\newcommand{\Wq}{\ensuremath{W(\vq)} }
\newcommand{\Wper}{\ensuremath{W_{\perp}} }
\newcommand{\Wperq}{\ensuremath{W_{\perp}(\vq)} }
\newcommand{\Vper}{\ensuremath{V_{\perp}} }
\newcommand{\Vperq}{\ensuremath{V_{\perp}(\vq)} }
\newcommand{\Vpara}{\ensuremath{V_{\parallel}} }
\newcommand{\Vparaq}{\ensuremath{V_{\parallel}(\vq)} }
\newcommand{\Vu}{\ensuremath{V_4} }
\newcommand{\Vuq}{\ensuremath{V_4(\vq)} }  

\newcommand{\Bi}{B_1}
\newcommand{\Bii}{B_2}
\newcommand{\Biii}{B_3}
\newcommand{\Biv}{B_4}

\newcommand{\Ti}{\ensuremath{\mathcal{T}_1} }
\newcommand{\Tii}{\ensuremath{\mathcal{T}_2} }
\newcommand{\Tiii}{\ensuremath{\mathcal{T}_3} }
\newcommand{\Tiv}{\ensuremath{\mathcal{T}_4} }


\begin{document}
\title{Queue Length Behavior in a Switch under the MaxWeight Algorithm }
\author{Siva Theja Maguluri\\Mathematical Sciences and Analytics 
\\IBM T. J. Watson Research Center\\ Yorktown Heights, NY 10598\\smagulu@us.ibm.com \and  R. Srikant\\
Department of ECE and CSL\\University of Illinois at Urbana-Champaign\\Urbana, IL 61801\\rsrikant@illinois.edu}

\date{}
\maketitle

\begin{abstract}
We consider a switch operating under the MaxWeight scheduling algorithm, under any traffic pattern such that all the ports are loaded. This system is interesting to study since the queue lengths exhibit a multi-dimensional state-space collapse in the heavy-traffic regime.  We use a Lyapunov-type drift technique to characterize the heavy-traffic behavior of the expectation of the sum queue lengths in steady-state, under the assumption that all ports are saturated and all queues receive non-zero traffic. Under these conditions, we show that the heavy-traffic scaled queue length is given by $\left(1-\frac{1}{2n}\right)||\sigma||^2$, where $\sigma$ is the vector of the standard deviations of arrivals to each port in the heavy-traffic limit. In the special case of uniform Bernoulli arrivals, the corresponding formula is given by $\left(n-\frac{3}{2}+\frac{1}{2n}\right)$. The result shows that the heavy-traffic scaled queue length has optimal scaling with respect to $n,$ thus settling one version of an open conjecture; in fact, it is shown that the heavy-traffic queue length is at most within a factor of two from the optimal. We then consider certain asymptotic regimes where the load of the system scales simultaneously with the number of ports. We show that the MaxWeight algorithm has optimal queue length scaling behavior provided that the arrival rate approaches capacity sufficiently fast.
\end{abstract}


\section{Introduction}
Consider a collection of queues arranged in the form of an $n\times n$ matrix.  The queues are assumed to operate in discrete-time and jobs arriving to the queues will be called packets. The following constraints are imposed on the service process of the queueing system: (a) at most one queue can be served in each time slot in each row of the matrix, (b) at most one queue can be served in each time slot in each column of the matrix, and (c) when a queue is served, at most one packet can be removed from the queue. Such a queueing system is called a \emph{switch}.

A scheduling algorithm for the switch is a rule which selects the queues to be served in each time slot. A well-known algorithm called the MaxWeight algorithm is known to optimize the throughput in a switch. The algorithm was derived in a more general context in \cite{TasEph_92} and for the special context of the switch considered in here in \cite{mckeown96walrand}, where it was also shown that other seemingly good policies are not throughput-optimal. An important open question that is not fully understood is whether the MaxWeight algorithm is also queue length or delay optimal in any sense. In \cite{stolyar2004maxweight}, it was shown that the MaxWeight algorithm minimizes the sum of the squares of the queue lengths in heavy-traffic under a condition called \emph{Complete Resource Pooling} (CRP). For the switch, the CRP condition means that the arriving traffic saturates at most one column or one row of the switch. The result relies on the fact that, under CRP and in the heavy-traffic regime, there is a one-dimensional state-space collapse, i.e., the state of the system collapses to a line. When the CRP condition is not met, the state-space collapses to a lower-dimension, but is not one-dimensional. State-space collapse without the CRP condition was established in \cite{Stolyar_cone_SSC} when the arrivals are deterministic. For stochastic arrivals, state-space collapse for the fluid limit was studied in
 \cite{ShaWis_11}, and a diffusion limit has been established in \cite{kang2009diffusion}. However, a characterization of the steady-state behavior of the diffusion limit was still open.

In this paper, we use the Lyapunov-type drift technique introduced in \cite{erysri-heavytraffic}. The basic idea is to set the drift of an appropriately chosen function equal to zero in steady-state to obtain both upper and lower bounds on quantities of interest, such as the moments of the queue lengths. To obtain upper bounds one has to establish state-space collapse in a sense that is somewhat different than the one in \cite{stolyar2004maxweight}: the main difference being that the state-space collapse is expressed in terms of the moments of the queue lengths in steady-state. This form of state-space collapse can then be readily used in the drift condition to obtain the upper bound. However, in \cite{erysri-heavytraffic}, the usefulness of the drift technique was only established under the CRP condition. In this paper, we consider the switch with uniform traffic, i.e., where the arrival rates to all queues are equal. Thus, in the heavy-traffic regime, when the traffic in one column (or row) approaches its capacity, the traffic in all rows and columns approach capacity, and the CRP condition is violated. The main contribution of the paper is to characterize the expected steady-state queue lengths in heavy-traffic even though the CRP condition is violated. As mentioned earlier, when the CRP condition is violated, the state does not typically collapse to a single dimension. The main challenge in our proof is due to the difficulty in characterizing the behavior of the queue length process under such a multi-dimensional state-space collapse. Characterizing the behavior of the queue lengths under multi-dimensional state-space collapse has been difficult, in general, except in rare cases; see \cite{kang2009state, ji2010optimal} for two such examples in other contexts.

The difficulty in understanding the steady-state queue length behavior of the MaxWeight algorithm has meant that it is unknown whether the the MaxWeight algorithm minimizes the expected total queue length in steady-state. One way to pose the optimality question is to increase the number of queues in the system, or increase the arrival to a point close to the boundary of the capacity region (the heavy-traffic regime), or do both, and study whether the MaxWeight algorithm is queue-length-optimal in a scaling sense. A conjecture regarding the scaling behavior for any algorithm, both in heavy-traffic and under all traffic conditions, has been stated in \cite{shah_switch_open}. The authors first heard about the non-heavy-traffic version of this conjecture from A. L. Stolyar in 2005. The conjecture seemed to be difficult to verify for the MaxWeight algorithm, and so a number of other algorithms have been developed to achieve either optimal or near-optimal scaling behavior; see \cite{neely2007logarithmic,shah2012optimal,zhong2014scaling}. The results in this paper establish the validity of one version of the conjecture (pertaining to uniform traffic in the heavy-traffic regime) for the MaxWeight algorithm.

\emph{Note on Notation:}
The set of real numbers, and the set of non-negative real numbers
 are denoted by $\mathbb{R}$, and $\mathbb{R}_+$
  respectively.
We work in the $n^2-\text{dimensional}$ Euclidean space $\mathbb{R}^{n^2}$. We represent vectors in this space in bold font, by $\vx$. We use two indices $1\leq i \leq n$ and $1\leq j \leq n$ for different components of $\vx$. We represent the $(i,j)^{\text{th}}$ component by $x\lij$ and thus, $\vx = (x\lij)\lij$.
For two vectors $\vx$ and $\vy$ in $\mathbb{R}^{n^2}$, their inner product $\left\langle \vx,\vy\right\rangle $ and Euclidean norm $\|\vx\|$are defined by
\ba
\left\langle \vx,\vy\right\rangle  \triangleq \sum_{i=1}^n\sum_{j=1}^n x\lij y\lij,
    \quad \|\vx\|\triangleq \sqrt{\left\langle \vx,\vx\right\rangle} = \sqrt{\sum_{i=1}^n\sum_{j=1}^n x\lij^2}.
\ea
For two vectors $\vx$ and $\vy$ in $\mathbb{R}^{n^2}$, $\vx \leq \vy$ means $x\lij \leq y\lij$ for every $(i,j)$.
We use \vone to denote the all ones vector.
Let \vei denote the vector defined by $\ei\lij =1$ for all $j$ and $\ei_{i',j}=0$ for all $i' \neq i$ and for all $j$. Thus, \vei is a matrix with $i^{\text{th}}$ row being all ones and zeros every where else.
Similarly, let \vetj denote the vector defined by $\etj\lij =1$ for all $i$ and $\etj_{i,j'}=0$ for all $j' \neq j$ and for all $i$, i.e., it is a matrix with $j^{\text{th}}$ column being all ones and zeros every where else.
For a random process $\vq(t)$ and a Lyapunov function $V(.)$, we will sometimes use $V(t)$ to denote $V(\vq(t))$. We use $\text{Var}(.)$ to denote variance of a random variable.

\section{Preliminaries}
In this section, we will present the model of an input queued switch, MaxWeight scheduling algorithm, some observations on the geometry of the capacity region and other preliminaries.
\subsection{System Model and MaxWeight Algorithm}\label{sub:model}

An input queued switch is a model for cross-bar switches that are widely used. An $n\times n$ switch has $n$ input ports and $n$ output ports. We consider a discrete time system. In each time slot $t$, packets arrive at any of the input ports to be delivered to any of the output ports.  When scheduled, each packet needs one time slot to be transmitted across.

Each input port maintains $n$ separate queues, one each for packets to be delivered to each of the $n$ output ports.
We denote the queue length of packets at input port $i$ to be delivered at output port $j$ at time $t$ by $\qijt$. Let $\vq\in \bR^{n^2}$ denote the vector of all queue lengths.

Let $\aij(t)$ denote the number of packet arrivals at input port $i$ at time $t$ to be delivered to output port $j$, and we let $\va\in \bR^{n^2}$ denote the vector $(a\lij)\lij$. For every input-output pair (i,j), the arrival process $\aij(t)$ is a stochastic process that is i.i.d across time, with mean $\bE[\aij(t)]=\lambda \lij$ and variance $\text{Var}(\aij(t))=\sigma^2 \lij$ for any time $t$.
We assume that the arrival processes are independent across input-output pairs, (i.e, if $(i,j)\neq (i',j')$, the processes $a\lij(t)$ and $a_{i'j'}(t)$ are independent) and are also independent of the queue lengths or schedules chosen in the switch. We further assume that for all $i,j,t$, $\aij(t) \leq a_{\max}$  for some  $a_{\max}\geq 1$ and $P(\aij(t)=0)>\epsilon_a$ for some $\epsilon_a>0$.
The arrival rate vector is denoted by $\vlam = (\lambda \lij)\lij$ and the variance vector $(\sigma^2 \lij)\lij$ is denoted by $(\vsig)^2$ or $\vsig^2$. We will use $\vsig$ to denote $(\sigma\lij)\lij$

In each time slot, each input port can be matched to only one output port and similarly, each output port can be mapped to only one input port.
These constraints can be captured in a graph. Let $G$ denote a complete $n \times n $ bipartite graph with $n^2$ edges between the set of input ports and the set of output ports.
The schedule in each time slot is a matching on this graph $G$.
We let $s\lij=1$ if the link between input port $i$ and output port $j$ is matched or scheduled and $s\lij=0$ otherwise and we denote $\vs = (s\lij)\lij$. Then, the set of feasible schedules,
$\cS \subset \bR^{n^2}$ is defined as follows.
\ba
\cS = \left\{\vs \in \{0,1\}^{n^2} : \sum_{i=1}^{n}s\lij \leq 1, \sum_{j=1}^{n}s\lij \leq 1  \> \forall\> i,j\in\{1,2,\ldots,n\} \right\}.
\ea
Let $\cS^*$ denote the set of maximal feasible schedules. Then, it is easy to see that
\ba
\cS^* = \left\{\vs \in \{0,1\}^{n^2} : \sum_{i=1}^{n}s\lij = 1 , \sum_{j=1}^{n}s\lij = 1   \> \forall\> i,j\in\{1,2,\ldots,n\}\right\}.
\ea
Each element in this set corresponds to a perfect matching on the graph $G$. Each of these maximal feasible schedules is also a permutation $\pi$ on the set ${1,2,\ldots, n}$ with $\pi(i)=j$ if $s\lij=1$.

A scheduling policy or algorithm picks a schedule $\vs(t)$ in every time slot based on the current queue length vector, $\vq(t)$.
In each time slot, the order of events is as follows. Queue lengths at the beginning of time slot $t$ are $\vqt$. A schedule $\vs(t)$ is then picked for that time slot based on the queue lengths. Then, arrivals for that time $\va(t)$ happen. Finally the packets are served and there is unused service if there are no packets in a scheduled queue. The queue lengths are then updated to give the queue lengths for the next time slot. The queue lengths therefore evolve as follows.
\ba
q\lij(t+1)& = \left[\qijt+\aij(t)-\sij(t)\right]^+ \\
& = \qijt+\aij(t)-\sij(t)+\uij(t)\\
\vq(t+1) & = \vq(t)+\va(t)-\vs(t)+\vu(t)
\ea
where $[x]^+ = \max(0,x)$ is the projection onto positive real axis, $u\lij(t)$ is the unused service on link $(i,j)$. Unused service is $1$ only when link $(i,j)$ is scheduled, but has zero queue length; and it is $0$ in all other cases.
Thus, we have that when $u\lij(t)=1$, we have $\qij(t)=0$, $a\lij(t)=0$, $s\lij(t)=1$ and $\qij(t+1)=0$. 
Therefore, we have $u\lij(t)\qijt=0$, $u\lij(t)a\lij(t)=0$ and $u\lij(t)\qij(t+1)=0$.
Also note that since $u\lij(t)\leq s\lij(t)$, we have that $\sum_{i=1}^{n}u\lij \in \{0,1\}$ and $\sum_{j=1}^{n}u\lij\in \{0,1\}$ for all $i,j$.

The queue lengths process $\vq(t)$ is a Markov chain. The switch is said to be stable under a scheduling policy if the sum of all the queue lengths is finite, i.e.,
\ba
\limsup_{C\rightarrow \infty} \limsup_{t\rightarrow \infty} \bP\left(\sum\lij \qijt \geq C\right) =0.
\ea
If the queue lengths process $\vq(t)$ is positive recurrent under a scheduling policy, then we have stability.
The capacity region of the switch is the set of arrival rates \vlam for which the switch is stable under some scheduling policy. A policy that stabilizes the switch under any arrival rate in the capacity region is said to be throughput optimal. The MaxWeight Algorithm is a popular scheduling algorithm for the switches. In every time slot $t$, each link $(i,j)$ is given a weight equal to its queue length $q\lij(t)$ and the schedule with the maximum weight among the feasible schedules $\mathcal{S}$ is chosen at that time slot. This algorithm is presented in Algorithm \ref{alg:maxwt}. It is possible to show that the Markov chain $\vq(t)$ is irreducible and aperiodic under the MaxWeight algorithm for an appropriately defined state space \cite[Exercise 4.2]{srikantleibook}.
It is well known 
\cite{TasEph_92,mckeown96walrand} that the capacity region \cC of the switch is convex hull of all feasible schedules,
\ba
\cC = \text{Conv}(\mathcal{S}) = & \left\{\vlam \in \bR^{n^2}_+ : \sum_{i=1}^{n} \lambda\lij \leq 1  , \sum_{j=1}^{n} \lambda \lij \leq 1   \> \forall\> i,j\in\{1,2,\ldots,n\} \right\}\\
= & \left\{\vlam \in \bR^{n^2}_+ : \La\vlam,\vei \Ra  \leq 1, \La\vlam,\vetj \Ra  \leq 1 \> \forall\> i,j\in\{1,2,\ldots,n\} \right\}.
\ea
For any arrival rate vector $\vlam$, $\rho \triangleq \max\lij\{\sum_i\lambda\lij,\sum_j\lambda\lij\}$ is called the load.
It is also known that the queue lengths process is positive recurrent under the MaxWeight algorithm whenever the arrival rate is in the capacity region \cC (equivalently, load $\rho<1$) and therefore is throughput optimal.

\begin{algorithm}
\caption{\label{alg:maxwt}MaxWeight Scheduling Algorithm for an input-queued switch}

Consider the complete bipartite graph between the input ports and output ports. Let the queue length $q\lij(t)$ be the weight of the edge between input port $i$ and output port $j$. A maximum weight matching in this graph is chosen as the schedule in every time slot, i.e.,
\ban
\vs(t) = \argmax_{\vs \in \mathcal{S}} \sum\lij q\lij(t) s\lij = \argmax_{\vs \in \mathcal{S}} \left \langle \vq(t),\vs  \right\rangle \label{eq:MaxWt}
\ean
Ties are broken uniformly at random.
\end{algorithm}

Note that there is always a maximum weight schedule that is maximal. If the MaxWeight schedule chosen at time $t$, \vs is not maximal, there exists a maximal schedule $\vs^*\in \cS^*$ such that $\vs \leq \vs^*$ . For any link $(i,j)$ such that $s\lij=0$ and $s^*\lij=1$, $\qijt=0$. If not, \vs would not have been a maximum weight schedule. Therefore, we can pretend that the actual schedule chosen is $\vs^*$ and the links $(i,j)$ that are in \vs and $\vs^*$ have an unused service of $1$. Note that this does not change the scheduling algorithm, but it is just a convenience in the proof. Therefore, without loss of generality, we assume that the schedule chosen in each time slot is a maximal schedule, i.e.,
\ban
\vs(t)\in \cS^*   \text{ for all time }t \label{eq:maximal_sched}.
\ean Hence the MaxWeight schedule picks one of the $n!$ possible permutations from the set $\cS^*$ in each time slot.

For any arrival rate in the capacity region $\cC$, due to positive recurrence of $\vq(t)$, we have that a steady state distribution exists under MaxWeight policy. Let \bvq denote the steady state random vector.
In this paper, we focus on the average queue length under the steady state distribution, i.e., $\bE[\sum_{i,j}\bqij]$. We consider a set of systems indexed by $\epsilon$ with arrival rate $\vlam^{\epsilon}=(1-\epsilon)\vnu$, where \vnu is an arrival rate on the boundary of the capacity region \cC such that all the input and output ports are saturated and $\nu\lij>0$ for all $i,j$. The load of each system is then $(1-\epsilon)$.
We will study the switch when $\epsilon \downarrow 0$. This is called the heavy traffic limit.
We first show a 
universal lower bound on the average queue length in heavy traffic limit, i.e., on $\lim_{\epsilon \rightarrow 0} \bE[\sum_{i,j}\bqij]$. 
We then show that under MaxWeight policy, the limiting average queue length is 
within a factor of less than $2$ of the universal lower bound
and thus MaxWeight has optimal average queue length scaling. We will show these bounds using Lyapunov drift conditions.
We will use several different quadratic Lyapunov functions through out the paper.

\subsection{Geometry of the Capacity Region}

The capacity region $\cC $ is a coordinate convex polytope in $\bR^{n^2}$.
Here, we review some basic definitions.
For any set $P \in \bR^m$, its dimension is defined by 
\ba
\text{dim}(P) \triangleq \min \{\text{dim}(A) | P\subseteq A, A \text{ is an affine space } \}
\ea 
So the capacity region \cC has dimension $n^2$.
A hyperplane $H$  is said to be a supporting hyperplane of a polytope $P$ if $P\cap H \neq \emptyset$, $P\cap H_+ \neq \emptyset$ and $P\cap H_- = \emptyset$ where $H_+$ and $H_-$ are the open half-spaces determined by the hyperplane $H$. For any supporting hyperplane $H$ of polytope $P$, $P\cap H$ is called a face  \cite{Readdy_polytopes}. 
A face of a polytope is also a polytope with lower dimension. A face $F$ of polytope $P$ with dimension $\text{dim}(F)=\text{dim}(P)-1$ is called a \emph{facet}. Heavy traffic optimality of MaxWeight algorithm for generalized switches is shown in \cite{stolyar2004maxweight, erysri-heavytraffic} when a single input or output port is saturated or in other words when approaching an arrival rate vector on a facet of the capacity region. However, in this paper, we are interested in the case when all the ports are saturated. The arrival rate vector \vnu in this case does not lie on a facet and so, that result is not applicable here.

When \vnu is the arrival rate vector on the boundary of the capacity region such that all the input ports and all the output ports are saturated, it lies on the face \cF,
\ba
\cF = & \left\{\vlam \in \bR^{n^2}_+ : \sum_{i=1}^{n} \lambda\lij = 1 , \sum_{j=1}^{n} \lambda \lij = 1\> \forall\> i,j\in\{1,2,\ldots,n\}\right\}\\
= & \left\{\vlam \in \bR^{n^2}_+ : \La\vlam,\vei \Ra  = 1, \La\vlam,\vetj \Ra  =1 \> \forall\> i,j\in\{1,2,\ldots,n\} \right\}.
\ea
It is easy to see that \cF as defined here is indeed a face by observing that the hyperplane $\La \vlam, \vone\Ra = n$ is a supporting hyperplane of the capacity region \cC and it contains any rate vector \vnu where all the ports are saturated.
The face \cF has dimension $(n-1)^2= n^2-(2n-1)$, and lies in the affine space formed by the intersection of the $2n$ constraints, $\{\sum_{i=1}^{n} \lambda\lij = 1 \text{ for all } j\}$, and  $\{\sum_{j=1}^{n} \lambda \lij = 1 \text{ for all } i \} $. Of these $2n$ constraints, one is linearly dependent of the others and we have $2n-1$ linearly independent constraints.
The face \cF is actually the convex combination of the maximal feasible schedules $\cS^*$, i.e., $\cF = \text{Conv}(\cS^*)$.
These results follow from 
the fact that the face \cF 
is the Birkhoff polytope $B_n$ that contains all the $n\times n$ doubly stochastic matrices. It is known \cite[page 20]{ziegler1995lectures} that $B_n$ lies in the $(n-1)^2$ dimensional affine space of the constraints and is a convex hull of the permutation matrices.


A facet of a polytope has a unique supporting hyperplane defining the facet.
It was shown in \cite{erysri-heavytraffic} that when the arrival rate vector approaches a rate vector in the relative interior of a facet, in the limit, the queue length vector concentrates along the direction of the normal vector of the unique supporting hyperplane.
However, a lower dimensional face can be defined by one of several hyperplanes, and so there is no unique normal vector. A lower dimensional face is always an intersection of two or more facets. We are interested in the case when the arrival rate vector approaches the vector $\vnu$ that lies on the face \cF. The face \cF is the intersection of the $2n$ facets,
$\{\La \vei,\vlam \Ra=1\} \cap\cC$  for all $i$, and $\{\La \vetj,\vlam \Ra=1\} \cap \cC$  for all $j$.
We will show in section \ref{sec:SSC} that in the heavy traffic limit, the
queue length vector concentrates within the cone spanned by the $2n$ normal vectors, $\{ \vei \text { for all } i \} \cup \{ \vetj \text{ for all } j\}$. We will call this cone \cK.
Here, we will present some definitions and other results related to this cone.
More formally, the cone \cK can be defined as follows.
\ba
\cK = \left\{ \vx \in \bR^{n^2}: \vx=\sum_i w_i \vei +\sum_j \widetilde{w}_j \vetj \text{ where } w_i \in\bR_+ \text{ and } \widetilde{w}_j \in \bR_+ \text{ for all } i,j  \right\}
\ea

Note that this means that for any $\vx \in \cK$ there are $w_i \in\bR_+$ and $\widetilde{w}_j \in \bR_+$ for all $i,j \in \{1,2,\ldots,n\}$ such that $x\lij = w_i+\widetilde{w}_j $.
However, such a representation need not be unique. For example, suppose that $w_i\geq 1$ for all $i$ , then setting $w'_i=w_i-1$ for each $i$ and $\widetilde{w}'_j = \widetilde{w}_j+1$ for each $j$, we again have that $w'_i \in\bR_+$, $\widetilde{w}'_j \in \bR_+$ and $x\lij = w'_i+\widetilde{w}'_j $  for all $i,j$.

The cone \cK lies in the $2n-1$ dimensional subspace spanned by the $2n-1$ independent vectors among the $2n$ vectors, $\{ \vei \text { for all } i \} \cup \{ \vetj \text{ for all } j\}$.
Call this space $\mathcal{V}_{\cK}$.
For any two vectors $\vx,\vy \in \cF$, $\vx-\vy$ is orthogonal to the subspace $\mathcal{V}_{\cK}$, i.e.,
\ban
\vx-\vy \perp \mathcal{V}_{\cK} \label{eq:F_perp_K}.
\ean
This is easy to see since $\La\vx,\vei\Ra= \La\vy,\vei\Ra = 1$ for all $i$ and $\La\vx,\vetj\Ra= \La\vy,\vetj\Ra = 1$ for all $j$. If $\mathcal{V}_{\cF}$ denotes the subspace obtained by translating the affine space spanned by \cF, it follows that  the spaces $\mathcal{V}_{\cK}$ and $\mathcal{V}_{\cF}$ are orthogonal because translation means subtraction by a vector. Moreover, they span the entire space $\bR^{n^2}$ since their dimensions sum to $n^2$.
We now present a lemma about the structure of any vector in the cone $\cK$.
\begin{lemma}\label{lem:cone_q_avg}
For any vector $\vx \in \cK$, we have
\ba
x\lij = \frac{1}{n}\sum_{j'=1}^n x_{ij'} + \frac{1}{n} \sum_{i'=1}^n x_{i'j} -\frac{1}{n^2} \sum_{i'=1}^n\sum_{j'=1}^n x_{i'j'}.
\ea
\end{lemma}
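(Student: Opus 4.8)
The plan is to exploit the defining additive structure of the cone $\cK$ directly. By the definition of $\cK$ recalled just above the statement, any $\vx \in \cK$ admits a representation $x\lij = w_i + \widetilde{w}_j$ for some scalars $w_i,\widetilde{w}_j \in \bR_+$; in fact the computation that follows never uses nonnegativity, only the additive form. The whole identity is then a bookkeeping exercise in summing this representation over rows, over columns, and over the entire $n \times n$ array.

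First I would record the three partial sums that appear on the right-hand side. Summing $x\lij = w_i + \widetilde{w}_j$ over $j'$ gives the row sum $\sum_{j'=1}^n x_{ij'} = n\,w_i + \sum_{j'=1}^n \widetilde{w}_{j'}$; summing over $i'$ gives the column sum $\sum_{i'=1}^n x_{i'j} = \sum_{i'=1}^n w_{i'} + n\,\widetilde{w}_j$; and summing over both indices gives the grand total $\sum_{i'=1}^n\sum_{j'=1}^n x_{i'j'} = n\sum_{i'=1}^n w_{i'} + n\sum_{j'=1}^n \widetilde{w}_{j'}$.

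Then I would substitute these three expressions into the right-hand side of the claimed identity and collect terms. The $\frac{1}{n}$-scaled row sum contributes $w_i + \frac{1}{n}\sum_{j'} \widetilde{w}_{j'}$, the $\frac{1}{n}$-scaled column sum contributes $\frac{1}{n}\sum_{i'} w_{i'} + \widetilde{w}_j$, and the $\frac{1}{n^2}$-scaled grand total subtracts $\frac{1}{n}\sum_{i'} w_{i'} + \frac{1}{n}\sum_{j'} \widetilde{w}_{j'}$. The two averages $\frac{1}{n}\sum_{i'} w_{i'}$ and $\frac{1}{n}\sum_{j'} \widetilde{w}_{j'}$ cancel against the corresponding terms, leaving exactly $w_i + \widetilde{w}_j = x\lij$, which is the desired identity.

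There is essentially no genuine obstacle here: the statement is a direct algebraic verification, and the only thing to watch is the coefficient bookkeeping so that the $w$- and $\widetilde{w}$-averages cancel cleanly against the grand-total term. One conceptual point worth flagging is that, although the representation $x\lij = w_i + \widetilde{w}_j$ is not unique (as the excerpt itself observes), both sides of the identity are written purely in terms of the entries $x\lij$. Hence the identity holds for every $\vx \in \cK$ independently of the chosen representation, since any valid choice of $w_i,\widetilde{w}_j$ reproduces the same value $x\lij$ on the left and the same row, column, and total sums on the right.
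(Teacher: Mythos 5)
Your proposal is correct and follows essentially the same route as the paper's proof: substitute the representation $x\lij = w_i + \widetilde{w}_j$ into the right-hand side and verify that the row, column, and grand-total averages cancel to leave $w_i + \widetilde{w}_j = x\lij$. Your added remark that the identity is independent of the (non-unique) choice of representation is a sensible observation, though not needed since both sides depend only on $\vx$.
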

\begin{remark}
In other words, any element in the matrix $\vx$, is equal to the average of all the elements in its row plus the average of all the elements in its column minus the average of all the elements in the whole matrix. Suppose the queue lengths $\vq\in\cK$, then any queue length from an input port to an output port is equal to the average queue lengths at that input port plus the average queue lengths at that output port minus the average queue length of all the queues in the switch.
\end{remark}
\begin{proof}
Since $x\lij$ is of the form $w_i+\widetilde{w}_j$ for any $\vx \in \cK$, we have
\ba
 \frac{1}{n}\sum_{j'=1}^n x_{ij'} + \frac{1}{n} \sum_{i'=1}^n x_{i'j} -\frac{1}{n^2} \sum_{i'=1}^n\sum_{j'=1}^n x_{i'j'} = &\frac{1}{n} \sum_{j'=1}^n (w_i+\widetilde{w}_{j'} ) + \frac{1}{n} \sum_{i'=1}^n (w_{i'}+\widetilde{w}_j ) -\frac{1}{n^2} \sum_{i'=1}^n\sum_{j'=1}^n (w_{i'}+\widetilde{w}_{j'} )\nonumber\\
=& w_i+\widetilde{w}_{j} \nonumber\\
=& x\lij.
\ea
This proves the lemma.
\end{proof}

\subsubsection{Projection onto the cone \cK}

The cone \cK is closed and convex. For any $\vx \in \bR^{n^2}$, the closest point in the cone \cK to $\vx$ is called the projection of $\vx$ on to the cone \cK and we will denote it by
$\vx\para$. More formally,
\ba
\vx\para = \argmin_{\vy \in \cK}\|\vx-\vy \|
\ea
For a closed convex cone \cK,  the projection $\vx\para$ is well defined and is unique 
\cite[Appendix E.9.2]{mebooconvex}. 
We will use $\vx\per$ to denote $\vx-\vx\para$.
We will use $x\paraij $ to denote the $(i,j)^{\text{th}}$ component of $\vx\para$. Similarly, $x\perij $.

Note that unlike projection on to a subspace, projection on to a cone is not linear, i.e., $(\vx+\vy)\para\neq \vx\para+\vy\para$. A simple counter example is the following. In $\bR^2$, let $\vx=(2,2)$ and $\vy=(-1,-1)$. Consider the positive quadrant as the cone of interest. Then, $\vx\para=(2,2)$, $\vy\para = (0,0)$ and $(\vx+\vy)\para = (1,1)$.

Since for any $\vx\in \bR^{n^2}$, $\vx\para \in \cK$, from the definition of the cone \cK, we have that every component of $\vx\para$ is non negative, i.e., $x\paraij \geq 0$. However, $\vx\per$ could have negative components.

The polar cone $\cKo$ of cone \cK is defined as
\ba
\cKo= \left\{\vx \in \bR^{n^2}: \La\vx, \vy\Ra\leq 0 \text{ for all } \vy \in \cK\right\}.
\ea
The polar cone $\cKo$  is negative of the dual cone $\cK^*$ of the cone \cK. For any $\vx \in \bR^{n^2}$, $\vx\per \in \cKo$ and $\La\vx\para,\vx\per\Ra=0$ \cite[Appendix E.9.2]{mebooconvex}. Therefore, pythagoras theorem is applicable, i.e.,
\ban
\|\vx\|^2=\|\vx\para\|^2+\|\vx\per\|^2 \label{eq:pythagoras}
\ean and so, $\|\vx\para\| \leq \|\vx\|$ and $\|\vx\per\| \leq \|\vx\|$.

Projection onto any closed convex set in $\bR^{n^2}$ (and so onto a closed convex cone) is nonexpansive \cite[Appendix E.9.3]{mebooconvex}. Therefore, we have $\|\vx\para-\vy\para| \leq \|\vx-\vy\|$. Since $\vx\per$ is a projection onto $\cKo$, we also have
\ban
\|\vx\per-\vy\per| \leq \|\vx-\vy\| \label{eq:nonexpansive_projection}.
\ean

\subsection{Moment bounds from Lyapunov drift conditions}
In this paper, we will use the Lyapunov drift based approach presented in \cite{erysri-heavytraffic} to obtain bounds of average queue length under MaxWeight. A key ingredient in this approach is to obtain moment bounds from drift conditions. A lemma from \cite{hajek_drift} was used in \cite{erysri-heavytraffic} to obtain these bounds and we first state it here as it was stated in \cite{erysri-heavytraffic}.

\begin{lemma} \label{lem:Hajek}
For an irreducible and aperiodic Markov chain $\{X(t)\}_{t\geq 0}$ over a countable state space $\mathcal{X},$ suppose $Z:\mathcal{X}\rightarrow \mathbb{R}_+$ is a nonnegative-valued Lyapunov function. We define the drift of $Z$ at $X$ as $$\Delta Z(X) \triangleq [Z(X(t+1))-Z(X(t))]\>\mathcal{I}(X(t)=X),$$ where $\mathcal{I}(.)$ is the indicator function. Thus, $\Delta Z(X)$ is a random variable that measures the amount of change in the value of $Z$ in one step, starting from state $X.$ This drift is assumed to satisfy the following conditions:
\begin{enumerate}[label=\textbf{C\arabic*},ref=C.\arabic*] 
\item\label{cond:C1} There exists an $\eta>0,$ and a $\kappa<\infty$ such that for any $t=1,2,\ldots$ and for all $X\in \mathcal{X}$ with $ Z(X)\geq \kappa,$
 \begin{eqnarray*}
    \mathbb{E}[\Delta Z(X) | X(t)=X]
    \leq - \eta .
 \end{eqnarray*}
\item\label{cond:C2} There exists a $D < \infty$ such that for all $X \in \mathcal{X},$
  \begin{eqnarray*}
  \mathbb{P}\left(|\Delta Z(X)| \leq D\right) = 1.
  \end{eqnarray*}
\end{enumerate}
Then, there exists a $\theta^\star > 0$ and a $C^\star < \infty$ such that
$$\limsup_{t\rightarrow \infty} \mathbb{E}\left[e^{\theta^\star Z(X(t))}\right] \leq C^\star.$$
If we further assume that the Markov chain $\{X(t)\}_t$ is positive recurrent, then $Z(X(t))$ converges in distribution to a random variable $\overline{Z}$ for which
$$ \mathbb{E}\left[e^{\theta^\star \overline{Z}}\right] \leq C^\star,$$
which directly implies that all moments of $\overline{Z}$ exist and are finite.
\end{lemma}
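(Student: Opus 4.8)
The plan is to prove the result by building an \emph{exponential} Lyapunov function out of $Z$ and showing that it obeys a one-step geometric drift inequality of the form $\mathbb{E}[V(X(t+1))\mid X(t)=X]\le \rho\,V(X)+B$ with $\rho<1$, which then yields a uniform bound on $\mathbb{E}[V(X(t))]$ by iterating a scalar recursion. Concretely, I would fix $V(X)=e^{\theta Z(X)}$ for a parameter $\theta>0$ to be chosen, and write $Z(X(t+1))=Z(X)+\Delta Z$ with $|\Delta Z|\le D$ almost surely by \ref{cond:C2}. Factoring, $\mathbb{E}[V(X(t+1))\mid X(t)=X]=e^{\theta Z(X)}\,\mathbb{E}[e^{\theta\,\Delta Z}\mid X(t)=X]$, so everything reduces to controlling the conditional moment generating function of the bounded increment.

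The crux is the choice of $\theta$. Using the second-order Taylor expansion with Lagrange remainder, $e^{y}=1+y+\tfrac12 y^2 e^{\xi}$ for some $\xi$ between $0$ and $y$, applied to $y=\theta\,\Delta Z$ and using $|\Delta Z|\le D$, I obtain
\[
\mathbb{E}[e^{\theta\,\Delta Z}\mid X]\le 1+\theta\,\mathbb{E}[\Delta Z\mid X]+\tfrac12\theta^2 D^2 e^{\theta D}.
\]
On the set $\{Z(X)\ge\kappa\}$, condition \ref{cond:C1} gives $\mathbb{E}[\Delta Z\mid X]\le-\eta$, so the right-hand side is at most $1-\theta\eta+\tfrac12\theta^2 D^2 e^{\theta D}$. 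Since this equals $1-\theta\eta+O(\theta^2)$ as $\theta\downarrow 0$, there is a $\theta^\star>0$ for which $\rho:=1-\theta^\star\eta+\tfrac12(\theta^\star)^2 D^2 e^{\theta^\star D}<1$. This balancing of the negative first-order drift against the positive second-order term, made possible precisely by the bounded-increment hypothesis \ref{cond:C2}, is the main obstacle and the place where all the hypotheses are used. For $Z(X)\ge\kappa$ it gives $\mathbb{E}[V(X(t+1))\mid X]\le\rho\,V(X)$, while on $\{Z(X)<\kappa\}$ the crude bound $V(X(t+1))\le e^{\theta^\star(\kappa+D)}$ holds. Combining the two cases yields, for every $X$, the inequality $\mathbb{E}[V(X(t+1))\mid X(t)=X]\le\rho\,V(X)+B$ with $B=e^{\theta^\star(\kappa+D)}$.

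Taking unconditional expectations and iterating the recursion $\mathbb{E}[V(X(t+1))]\le\rho\,\mathbb{E}[V(X(t))]+B$ gives $\mathbb{E}[V(X(t))]\le\rho^t\,\mathbb{E}[V(X(0))]+B/(1-\rho)$, so that $\limsup_{t\to\infty}\mathbb{E}[e^{\theta^\star Z(X(t))}]\le C^\star$ with $C^\star=B/(1-\rho)$, which is the first assertion (starting the chain from a fixed state makes $\mathbb{E}[V(X(0))]$ finite). For the second assertion, positive recurrence gives $X(t)\Rightarrow\overline{X}$ in distribution, hence $Z(X(t))\Rightarrow\overline{Z}$; since $z\mapsto e^{\theta^\star z}$ is continuous and nonnegative, the portmanteau (Fatou) inequality for weak convergence yields $\mathbb{E}[e^{\theta^\star\overline{Z}}]\le\liminf_{t}\mathbb{E}[e^{\theta^\star Z(X(t))}]\le C^\star$. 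Finiteness of all moments is then immediate from $e^{\theta^\star z}\ge (\theta^\star z)^k/k!$ for $z\ge 0$, which gives $\mathbb{E}[\overline{Z}^{\,k}]\le k!\,C^\star/(\theta^\star)^k<\infty$.
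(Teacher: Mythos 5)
Your proof is correct. The paper itself does not prove this lemma --- it is quoted verbatim from \cite{erysri-heavytraffic}, which in turn attributes it to \cite{hajek_drift} --- so there is no in-paper argument to compare against; what you have written is a correct, self-contained rendition of the standard argument behind Hajek's result: pass to the exponential Lyapunov function $e^{\theta Z}$, use the bounded-increment condition \ref{cond:C2} to control the conditional moment generating function of the increment via a second-order Taylor bound, and choose $\theta^\star$ small enough that the negative first-order drift from \ref{cond:C1} beats the $O(\theta^2)$ correction, yielding a geometric drift inequality whose iteration gives the uniform exponential moment bound. The two small points worth making explicit are ones you already flag: the $\limsup$ bound requires $\mathbb{E}\left[e^{\theta^\star Z(X(0))}\right]<\infty$ (automatic for a deterministic initial state), and the transfer to the stationary limit uses the portmanteau/Fatou inequality for the nonnegative continuous function $z\mapsto e^{\theta^\star z}$ rather than bounded convergence. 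With those noted, the argument is complete.
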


This lemma (and its original form in \cite{hajek_drift}) is quiet general and versatile. However, we use a different result in this paper to obtain moment bounds that are tighter than the bounds that can be obtained using Lemma \ref{lem:Hajek} (or its original form in \cite{hajek_drift}).
The following lemma essentially follows from \cite[Theorem 1]{bertsimas_momentbound} except for some minor differences. The proof is presented in Appendix \ref{sec:Bertsimas_proof} and makes use of  Lemma \ref{lem:Hajek}.

\begin{lemma} \label{lem:Bertsimas} Consider an irreducible and aperiodic Markov chain Markov Chain $\{X(t)\}_{t\geq 0}$ over a countable state space $\mathcal{X},$
 suppose $Z:\mathcal{X}\rightarrow \mathbb{R}_+$ is a nonnegative-valued Lyapunov function.
The drift  $\Delta Z(X)$ of $Z$ at $X$  as defined in Lemma \ref{lem:Hajek} is assumed to satisfy the conditions \ref{cond:C1} and \ref{cond:C2}
%
Further assume that the Markov chain $\{X(t)\}_t$ converges in distribution to a random variable $\overline{X}$. 
Then, for any $m=0,1,2,\ldots$,
\ba
\bP\left(Z\left(\overline{X}\right)>\kappa+2Dm\right)\leq \left(\frac{D}{D+\eta} \right)^{m+1}.\ea
As a result, for any $r=1,2,\ldots$,
\ba
\bE[Z\left(\overline{X}\right)^r]\leq &
\left(2\kappa\right)^r+
 \left(4D\right)^r \left(\frac{D+\eta}{\eta} \right)^r r!.
\ea
\end{lemma}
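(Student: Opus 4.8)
The plan is to treat the statement as a fact about the scalar process $Y(t) := Z(X(t))$, which by hypothesis has increments bounded by $D$ in absolute value (condition \ref{cond:C2}) and conditional drift at most $-\eta$ whenever $Y(t) \ge \kappa$ (condition \ref{cond:C1}). First I would record the preliminaries supplied by Lemma \ref{lem:Hajek}: under \ref{cond:C1}--\ref{cond:C2} the limit $\overline Z = Z(\overline X)$ satisfies $\mathbb{E}[e^{\theta^\star \overline Z}] < \infty$, so every stationary expectation and infinite series below converges and the one-step ``drift in steady state'' identities are licensed. This also forces $\mathbb{P}(Y \le \kappa) > 0$ (a nonnegative stationary process cannot have drift bounded away from $0$ below on its whole support), which I need so that the recurrent set $\{Y \le \kappa\}$ is actually visited.

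\textbf{Main step (geometric tail).} Fix $\theta > 0$ by $e^{2\theta D} = (D+\eta)/D$, so that $\rho := e^{-2\theta D} = D/(D+\eta)$. For any increment $\Delta$ with $|\Delta| \le D$, convexity of $t \mapsto e^{\theta t}$ on $[-D,D]$ gives the two-point bound $e^{\theta\Delta} \le \frac{D+\Delta}{2D}e^{\theta D} + \frac{D-\Delta}{2D}e^{-\theta D}$; taking the conditional expectation on $\{Y(t) \ge \kappa\}$ and using $\mathbb{E}[\Delta \mid X(t)] \le -\eta$ yields $\mathbb{E}[e^{\theta\Delta}\mid X(t)] \le \frac{D-\eta}{2D}e^{\theta D} + \frac{D+\eta}{2D}e^{-\theta D}$, and a direct computation shows this is $\le 1$ for the chosen $\theta$ (the inequality reduces to $\eta \le 3D$, which is automatic since $\eta \le D$). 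Hence $e^{\theta Y(t)}$ is a supermartingale while the chain remains above $\kappa$. I would then convert this per-step contraction into a tail bound through an excursion argument: decomposing the stationary trajectory into excursions of $\{Y > \kappa\}$, a stopped-supermartingale (optional-stopping) estimate bounds the probability that one excursion---which begins from a state with $Y \le \kappa + D$, since increments are at most $D$---ever reaches level $\kappa + 2Dm$ before returning to $\{Y \le \kappa\}$ by a factor of order $\rho^{m}$, and balancing excursion frequency against the time-stationary measure supplies the remaining factor of $\rho$, giving $\mathbb{P}(\overline Z > \kappa + 2Dm) \le \rho^{m+1}$. The spacing $2D$ is precisely what guarantees that bounded increments cannot jump over an entire band, so each of the $m$ bands must be climbed against the negative drift.

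\textbf{Moment bound.} This follows by routine summation of the tail. Using the telescoping estimate $\mathbb{E}[\overline Z^r] \le \kappa^r + \sum_{m\ge 0}(\kappa + 2D(m+1))^r\,\mathbb{P}(\overline Z > \kappa + 2Dm)$, inserting $\mathbb{P}(\overline Z > \kappa + 2Dm) \le \rho^{m+1}$, expanding $(\kappa + 2D(m+1))^r$, and applying $\sum_{m\ge 0}(m+1)^r\rho^{m} \le r!/(1-\rho)^{r+1}$ together with $1/(1-\rho) = (D+\eta)/\eta$, one collects exactly the two advertised terms $(2\kappa)^r$ and $(4D)^r\big((D+\eta)/\eta\big)^r r!$.

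The main technical obstacle is the middle step: passing rigorously from the local supermartingale property, valid only above $\kappa$, to the clean stationary bound with the exact exponent $\rho^{m+1}$. The delicate points are handling the boundary at level $\kappa$ (entry states satisfy only $Y \le \kappa + D$, not $Y = \kappa$), justifying optional stopping via the a priori integrability from Lemma \ref{lem:Hajek}, and correctly converting the per-excursion reaching probability into the time-stationary probability through a renewal-reward / flow-balance identity so that the stated exponent is carried. This accounting is the part that departs, in bookkeeping only, from \cite{bertsimas_momentbound}.
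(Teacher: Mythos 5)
Your second half (the moment bound) is essentially the paper's own computation: sum or integrate the geometric tail over bands of width $2D$ and control $\sum_{m\ge 0} m^r x^m$ by $r!/(1-x)^{r+1}$ via the Eulerian-number identity; that part is fine. The divergence is in the tail bound $\mathbb{P}\big(Z(\overline X)>\kappa+2Dm\big)\le \big(D/(D+\eta)\big)^{m+1}$. The paper does not prove this step at all: it quotes Theorem 1 of \cite{bertsimas_momentbound} (in its sharper form involving $\widetilde D$ and $p_{\max}$) and then relaxes $\widetilde D p_{\max}\le D$. You instead attempt a self-contained proof via an exponential supermartingale plus an excursion decomposition. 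Your local contraction step is correct: with $e^{2\theta D}=(D+\eta)/D$, the two-point convexity bound and the drift condition give $\mathbb{E}[e^{\theta\Delta}\mid X(t)]\le 1$ above level $\kappa$ (the reduction to $\eta\le 3D$, and $\eta\le D$ from \ref{cond:C1}--\ref{cond:C2}, check out).

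The step you yourself flag as the ``main technical obstacle'' is, however, a genuine gap rather than bookkeeping: as sketched, the excursion route does not deliver the exponent $m+1$. An excursion above $\kappa$ begins at a height at most $\kappa+D$, so optional stopping of $e^{\theta Y}$ bounds the per-excursion reaching probability of level $\kappa+2Dm$ only by $e^{\theta(\kappa+D)}/e^{\theta(\kappa+2Dm)}=\rho^{m-1/2}$, not $\rho^{m}$; and the renewal--reward conversion to the time-stationary probability multiplies this by the ratio of $\mathbb{E}[\text{time spent above the level}\mid\text{reached}]$ (which is at least $1$) to the mean cycle length, a ratio that is not bounded by $\rho^{3/2}$ in general. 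So this route yields a tail bound of the form $C\rho^{m}$ with a constant $C$ that can exceed $\rho$ --- qualitatively similar, quantitatively weaker than the statement. (That weaker form would still suffice for the paper's heavy-traffic conclusions, where only the $\epsilon$-independence of $M_r^{(\epsilon)}$ matters, but it does not prove the lemma as written.) To obtain the exact constant you would need the band-by-band stationary drift argument of \cite{bertsimas_momentbound} --- e.g., setting the steady-state drift of the truncated functions $\min\big(\max(Z-\kappa-2Dm,0),\,2D\big)$ to zero --- or simply cite that theorem as the paper does.
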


\section{Universal Lower Bound}\label{sec:ULB}
In this section, we will prove the following lower bound on the average queue lengths, which is valid under any scheduling policy.

\begin{proposition}\label{prop:Universal_LB}
Consider a set of switch systems with the arrival processes
$\va\se(t)$ described in Section \ref{sub:model}, parameterized by $0<\epsilon<1$, such that the mean arrival rate vector is $\vlam^{\epsilon}=(1-\epsilon)\vnu$ for some $\vnu \in \cF$ and variance is $\left(\vsig^{(\epsilon)}\right)^2$. The load is then $\rho=(1-\epsilon)$.
Fix a scheduling policy under which the switch system is stable for any $0<\epsilon<1$.
Let $\vq^{(\epsilon)}(t)$ denote the queue lengths process under this policy for each system. Suppose that this process converges in distribution to a steady state random vector $\bvq^{(\epsilon)}$.
Then, for each of these systems, the average queue length is lower bounded by
\ba
\bE \left [\sum\lij \bqij \se \right ] \geq \frac{\left\|\vsig\se\right\|^2}{2\epsilon} - \frac{n(1-\epsilon)}{2}
\ea
Therefore, in the heavy-traffic limit as $\epsilon \downarrow 0$, if $\left(\vsig^{(\epsilon)}\right)^2 \rightarrow \vsig^2$, we have
\ba
\liminf_{\epsilon \downarrow 0} \epsilon \bE \left [\sum\lij \bqij \se \right ] \geq \frac{\left\|\vsig\right\|^2}{2} .
\ea
\end{proposition}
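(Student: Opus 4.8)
The plan is to prove the stronger finite-$\epsilon$ inequality $\bE[\sum_{ij}\bqij\se] \ge \frac{\|\vsig\se\|^2}{2\epsilon} - \frac{n(1-\epsilon)}{2}$; the heavy-traffic statement then follows immediately by multiplying through by $\epsilon$ and letting $\epsilon\downarrow 0$, since the scaled additive term $\frac{\epsilon n(1-\epsilon)}{2}$ vanishes while $\|\vsig\se\|^2\to\|\vsig\|^2$. The crucial observation is that the bound decouples across the $n$ input ports. Writing $W_i(t)=\sum_j \qijt$ for the aggregate queue length at input port $i$, the total queue length is $\sum_i W_i(t)$, and $\|\vsig\se\|^2=\sum_i \Sigma_i^2$ with $\Sigma_i^2\triangleq\sum_j(\sigma_{ij}\se)^2$. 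Hence it suffices to show $\bE[\bar W_i]\ge \frac{\Sigma_i^2}{2\epsilon}-\frac{1-\epsilon}{2}$ for each $i$ and sum.

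First I would set up the scalar recursion for $W_i$. Since we may take the schedule to be maximal (Eq.~\eqref{eq:maximal_sched}), exactly one entry of row $i$ is served, so summing the queue update over $j$ gives $W_i(t+1)=W_i(t)+A_i(t)-1+U_i(t)$, where $A_i(t)=\sum_j \aij(t)$ and $U_i(t)=\sum_j \uij(t)\in\{0,1\}$. By the independence of arrivals across $j$ and $\vnu\in\cF$, we have $\bE[A_i]=\La\vlam\se,\vei\Ra=1-\epsilon$ and $\text{Var}(A_i)=\Sigma_i^2$. First-moment stationarity of $W_i$ then forces $\bE[U_i]=\epsilon$, so $U_i$ is a $\{0,1\}$-valued variable with mean $\epsilon$, whence $\text{Var}(U_i)=\epsilon(1-\epsilon)$.

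The core step is to set the steady-state drift of $W_i^2$ to zero. With $Y_i\triangleq A_i-1+U_i$ and $W_i(t+1)^2-W_i(t)^2=2W_iY_i+Y_i^2$, stationarity gives $2\bE[\bar W_iY_i]+\bE[Y_i^2]=0$. Using that arrivals are independent of the current queue, $\bE[\bar W_iA_i]=(1-\epsilon)\bE[\bar W_i]$, so $\bE[\bar W_iY_i]=-\epsilon\bE[\bar W_i]+\bE[\bar W_iU_i]$, and therefore
\[
2\epsilon\,\bE[\bar W_i]=\bE[Y_i^2]+2\,\bE[\bar W_iU_i]\ge \bE[Y_i^2],
\]
dropping the nonnegative term $\bE[\bar W_iU_i]$ (both factors are nonnegative). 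Since $\bE[Y_i]=0$, we have $\bE[Y_i^2]=\text{Var}(A_i)+\text{Var}(U_i)+2\,\text{Cov}(A_i,U_i)$, and using $\text{Cov}(A_i,U_i)=\bE[A_iU_i]-(1-\epsilon)\epsilon\ge-(1-\epsilon)\epsilon$ (again because $A_i,U_i\ge 0$), I obtain $\bE[Y_i^2]\ge\Sigma_i^2-\epsilon(1-\epsilon)$. This yields $\bE[\bar W_i]\ge\frac{\Sigma_i^2}{2\epsilon}-\frac{1-\epsilon}{2}$, and summing over $i$ gives the finite-$\epsilon$ bound.

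The main obstacle is the technical justification of the two stationarity identities: setting the expected one-step drift of $W_i$ and of $W_i^2$ to zero requires $\bE[\bar W_i^2]<\infty$ together with $\bE[\bar W_i(t+1)^2]=\bE[\bar W_i(t)^2]$ in steady state. Since the bound is vacuous when $\bE[\bar W_i]=\infty$, I can restrict to the finite-mean case and argue integrability of $\bar W_i^2$ from the uniformly bounded increments $|Y_i|\le n\,a_{\max}+1$ combined with the assumed convergence in distribution to $\bvq\se$; alternatively a standard truncation argument justifies passing the drift equality to the stationary limit. The remainder of the argument is elementary algebra resting on the two sign facts $\bE[\bar W_iU_i]\ge 0$ and $\bE[A_iU_i]\ge 0$.
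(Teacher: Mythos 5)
Your route is genuinely different from the paper's: you run the Kingman-type quadratic drift argument directly on the row-sum process $W_i(t)=\sum_j q_{ij}\se(t)$, whereas the paper first couples $W_i(t)$ with an auxiliary single-server queue $\phi_i\se(t+1)=[\phi_i\se(t)+\alpha_i\se(t)-1]^+$ (shown by induction to satisfy $\phi_i\se(t)\le W_i(t)$ pathwise) and only then sets the drift of $(\phi_i\se)^2$ to zero. Your algebra is correct and lands on exactly the same per-port quantity $\Sigma_i^2-\epsilon(1-\epsilon)$ as the paper; the terms you discard by sign, $\bE[\bar W_i U_i]\ge 0$ and $\bE[A_iU_i]\ge 0$, play the role of the paper's exact cancellation $\upsilon\se(t)\,\phi_i\se(t+1)=0$. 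Two minor points: the maximality convention \eqref{eq:maximal_sched} is introduced for MaxWeight, while this proposition quantifies over arbitrary stabilizing policies, so you should instead write $W_i(t+1)=W_i(t)+A_i(t)-1+\tilde U_i(t)$ with $\tilde U_i(t)=U_i(t)+1-\sum_j s_{ij}(t)\in\{0,1\}$; this costs nothing since your argument uses only nonnegativity and $\bE[\tilde U_i]=\epsilon$.

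The genuine gap is the justification of the zero-drift identity for $W_i^2$. For an arbitrary policy $W_i(t)$ need not be a Markov chain, and nothing guarantees $\bE[\bar W_i^2]<\infty$; your suggested justification --- bounded increments plus convergence in distribution --- does not imply integrability of the square (a stationary sequence with bounded increments can have a marginal with finite mean and infinite second moment, in which case $\bE[W_i(t+1)^2-W_i(t)^2]$ is an $\infty-\infty$ expression and need not vanish even though $\bE[2\bar W_iY_i+Y_i^2]$ is finite). A truncation or Ces\`aro-average argument can rescue the lower-bound direction, but it requires uniform-integrability control that you have not supplied. This is exactly the difficulty the paper's coupling is designed to sidestep: $\phi_i\se$ is a positive recurrent Markov chain determined only by the arrival process and independent of the scheduling policy, so Lemma \ref{lem:Hajek} applies to it directly and yields $\bE[(\overline{\phi}_i\se)^2]<\infty$ for free, after which the stochastic comparison $\bE[\sum_j\bqij\se]\ge\bE[\overline{\phi}_i\se]$ transfers the bound to the switch. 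Either import that coupling step, or carry out the truncation argument in full; as written, the moment-finiteness claim is the one step that would not survive scrutiny.
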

\begin{proof}
We will obtain a lower bound on sum of all the queue lengths by lower bounding the queue lengths at each input port, i.e., we will first bound $\bE[\sum_j \bqij\se]$ for a fixed input port $i$. We do this by considering a single queue that is coupled to the process $\sum_j \qij\se(t)$. Consider a single server queue $\phi_i\se(t)$ in discrete time. Packets 
arrive into this queue to be served. Each packet needs exactly one time slot of service. The arrival process to this queue is $\alpha_i\se(t)=\sum_ja\lij\se(t)$. Mean arrival to this queue is $\bE[\alpha_i\se(t)]=\sum_j\lambda\lij\se(t)=(1-\epsilon)\sum_j\nu\lij\se(t)=(1-\epsilon)$ since $\vnu \in \cF$.. As long as the queue is non empty, one packet is served in every time slot. Thus, this queue evolves as
\ba
\phi_i\se(t+1) &= \left[ \phi_i\se(t)+ \alpha_i\se(t) -1 \right ]^+ \\
&=  \phi_i\se(t)+ \alpha_i\se(t) -1 + \upsilon\se(t)
\ea
where $\upsilon\se(t)$ is the unused service and so $\upsilon\se(t)\phi_i\se(t+1)=0$.
Clearly, $\phi_i\se(t)$ is positive recurrent and let $\overline{\phi}_i\se$ denote the steady state random variable to which it converges in distribution.
\begin{claim}
In steady state
\ba
\bE[\sum_j \bqij\se] \geq \bE[\overline{\phi}_i\se]
\ea
\end{claim}
\begin{proof}
Suppose that at time zero, the queue $\phi_i\se$ starts with $\phi_i\se(0)= \sum_j \qij\se(0)$.
Then, for any time $t$, the queue $\phi_i\se(t)$ is stochastically no greater than $\sum_j \qij\se(t)$.
This can easily be seen using induction. For $t=0$, we have $\sum_j \qij(0) \geq \phi_i(0)$. Suppose that $\sum_j \qij\se(t) \geq \phi_i\se(t)$. Then, at time $(t+1)$,
\ba
\sum_j q\lij\se(t+1)& = \sum_j \left[\qij\se(t)+\aij\se(t)-\sij\se(t)\right]^+ \\
& \geq  \left[\sum_j ( \qij\se(t)+\aij\se(t)-\sij\se(t)) \right]^+  \\
& \geq  \left[\phi_i\se(t) + \alpha_i\se(t) -1 ) \right]^+  \\
& = \phi_i\se(t+1)
\ea
where the last inequality follows from the inductive hypothesis, definition of $\alpha\se(t)$, the constraint $\sij\se(t) \leq 1$ and the fact that if $x\geq y$, we have that $[x]^+ \geq [y]^+$.
Thus, we have that in steady state, $\bE[\sum_j \bqij\se] \geq \bE[\overline{\phi}_i\se]$. Since steady state distribution of $\sum_j \bqij\se$ and $\overline{\phi}_i\se$ does not depend on the initial state at time zero, we have the lower bound $\bE[\sum_j \bqij\se] \geq \bE[\overline{\phi}_i\se]$ independent of the initials states $\phi_i\se(0)$ and $\sum_j \qij\se(0)$.
\end{proof}


We will now bound $\bE[\overline{\phi}_i\se]$. This result is obtained in \cite{srikantleibook}. We present it here  for completeness. Consider the drift of $\bE[(\phi_i\se(t))^2]$.
\ba
\bE[(\phi_i\se(t+1))^2- (\phi_i\se(t))^2 ]
& = \bE[(\phi_i\se(t)+ \alpha_i\se(t) -1 + \upsilon\se(t))^2- (\phi_i(t)\se)^2 ]\\
& \stackrel{(a)}{=} \bE[(\phi_i\se(t)+ \alpha_i\se(t) -1 )^2 - (\upsilon\se(t))^2- (\phi_i(t)\se)^2 ]\\
& = \bE[(\alpha_i\se(t) -1 )^2 + 2(\phi_i\se(t))(\alpha_i\se(t) -1 )- (\upsilon\se(t))^2]\\
& \stackrel{(b)}{=} \bE[(\alpha_i\se(t) -(1-\epsilon)-\epsilon )^2] -2\epsilon \bE[\phi_i\se(t)]- \bE[\upsilon\se(t)]\\
& \stackrel{(c)}{=} \text{Var}\left(\alpha_i\se(t) \right)+\epsilon^2 -2\epsilon \bE[\phi_i\se(t)]- \bE[\upsilon\se(t)]\\
& \stackrel{(d)}{=} \sum_j \left(\sigma\lij\se\right)^2+\epsilon^2 -2\epsilon \bE[\phi_i\se(t)]- \bE[\upsilon\se(t)]
\ea
where $(a)$ follows from noting that $(\upsilon\se(t))(\phi_i\se(t)+ \alpha_i\se(t) -1 + \upsilon\se(t))=0$; (b) follows from independence of $\phi_i\se(t)$ and the arrivals $\alpha_i\se(t)$ and since $\upsilon\se(t)\in\{0,1\}$; (c) follows from the fact that $\bE[\phi_i\se(t)]=(1-\epsilon)$; (d) follows from the definition of $\alpha_i\se(t)$ and independence of the arrival process $a\lij(t)$ across ports.
It can easily be shown that  $\bE[(\overline{\phi}_i\se)^2]$ is finite \cite[Section 10.1]{srikantleibook}. Therefore, the steady state drift of $\bE[(\phi_i\se(t))^2]$ is zero, i.e., in steady-state, we get
\ban
2\epsilon \bE[\overline{\phi}_i\se]= \sum_j \left(\sigma\lij\se\right)^2+\epsilon^2 - \bE[\overline{\upsilon}\se] \label{eq:LB_single_server}
\ean
Consider the drift of $\bE[\phi_i\se(t)]$.
\ba
\bE[\phi_i\se(t+1)- \phi_i\se(t) ]
& = \bE[\alpha_i\se(t) -1 + \upsilon\se(t) ]\\
& = -\epsilon+\bE[ \upsilon\se(t) ]
\ea
Since $\phi_i\se(t) \in \bZ_+$, we have $\phi_i\se(t) \leq (\phi_i\se(t))^2$, and so we get finiteness of $\bE[\overline{\phi}_i\se]$ from that of $\bE[(\overline{\phi}_i\se)^2]$. Therefore, the drift of $\bE[\phi_i\se(t)]$ is zero in steady state. Thus, we get that in steady state, $\bE[\overline{\upsilon}\se]=\epsilon$. Substituting this in (\ref{eq:LB_single_server}), and using the claim, we get
\ban
\bE[\sum_j \bqij\se] \geq \bE[\overline{\phi}_i\se] = \frac{1}{2\epsilon}\sum_j \left(\sigma\lij\se\right)^2 - \frac{1-\epsilon}{2} \label{eq:LB_single_server2}
\ean
Since this lower bound is true for any input port $i$, summing over all the input ports, we have the proposition. Note that we could have obtained the same bound by similarly lower bounding the sum of lengths of all the queues destined to port $j$, i.e., $\sum_i \qij\se(t)$.
\end{proof}
We do not know if this lower bound is tight, i.e., if there is a scheduling policy that attains this lower bound. However, in section \ref{sec:MaxWt_UB}, we show that under MaxWeight scheduling algorithm, the average queue lengths are within a factor of less than $2$ away from this universal lower bound, thus showing that MaxWeight has optimal scaling.
Closing this gap is an open question.

\section{State Space Collapse under MaxWeight policy}\label{sec:SSC}

In this section, we will show that under the MaxWeight scheduling algorithm, in the heavy traffic limit, the steady state queue length vector concentrates within the cone \cK in the following sense.
As the parameter $\epsilon$ approaches zero, the mean arrival rate approaches the boundary of the capacity region and we know from the lower bound  that the average queue lengths go to infinity $\Omega(1/\epsilon)$.
We will show that under the MaxWeight algorithm, the  $\bvq\se\per$
component of the queue length vector is upper bounded independent of $\epsilon$. Thus the $\bvq\se\per$ component is negligible compared to the $\bvq\se\para$ component of $\bvq\se$.
This is called state space collapse. We say that the state space collapses to the cone $\cK$.
It was shown in \cite{rui_567classproj} that the state space collapses to the subspace containing the cone $\cK$. A similar result was also shown in \cite{singh2015maxweight} for a different problem.
Here, we show the stronger result that the state space collapses to the cone, which is essential to obtain the upper bounds in Section \ref{sec:MaxWt_UB}.

We define the following Lyapunov functions and their corresponding drifts.
\begin{gather*}
\Vq \triangleq \|\vq\|^2= \sum\lij \qij^2 \quad
\Wperq \triangleq \|\vq\per\| \qquad
\Vperq \triangleq \|\vq\per\|^2= \sum\lij q\perij^2 \qquad
\Vparaq \triangleq \|\vq\para\|^2= \sum\lij q\paraij^2 \\
\begin{aligned}
\Delta \Vq &\triangleq [\V(\vq(t+1))-\V(\vq(t))]\>\mathcal{I}(\vq(t)=\vq) \nonumber\\
\Delta \Wperq &\triangleq [\Wper(\vq(t+1))-\Wper(\vq(t))]\>\mathcal{I}(\vq(t)=\vq)  \\
\Delta \Vperq &\triangleq [\Vper(\vq(t+1))-\Vper(\vq(t))]\>\mathcal{I}(\vq(t)=\vq) \\
\Delta \Vparaq &\triangleq [\Vpara(\vq(t+1))-\Vpara(\vq(t))]\>\mathcal{I}(\vq(t)=\vq)
\end{aligned}
\end{gather*}

We will use Lemma \ref{lem:Bertsimas} using the Lyapunov function $\Wperq(.)$ to bound the $\bvq\se\per$ component in steady state. We need the following lemma, which follows from concavity of square root function and the pythagorean theorem (\ref{eq:pythagoras}). The proof of this lemma is similar to the proof of Lemma 7 in \cite{erysri-heavytraffic} and so we skip it here.

\begin{lemma}\label{lem:Wperp_V}
Drift of $\Wper(.)$ can be bounded in terms of drift of $\V(.)$ and $\Vpara(.)$ as follows.
\ba
\Delta \Wperq \leq \frac{1}{2\|\vq\per\|}\left(\Delta \Vq - \Delta \Vparaq \right) \qquad \forall \vq\in\bR^{n^2}
\ea
\end{lemma}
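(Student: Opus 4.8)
The plan is to exploit two facts: the concavity of the square-root function and the Pythagorean identity (\ref{eq:pythagoras}), which together reduce the drift of $\Wper(\cdot)=\sqrt{\Vper(\cdot)}$ to the drifts of $\V(\cdot)$ and $\Vpara(\cdot)$. No probabilistic argument is needed; this is a deterministic, pointwise comparison of the one-step increments, exactly as for Lemma~7 in \cite{erysri-heavytraffic}.

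First I would record that $\Wperq=\|\vq\per\|=\sqrt{\Vperq}$, so on the event $\{\vq(t)=\vq\}$ the drift is
\[
\Delta \Wperq = \sqrt{\Vper(\vq(t+1))}-\sqrt{\Vper(\vq(t))}.
\]
Then I would apply the tangent-line (supporting hyperplane) bound for the concave map $x\mapsto\sqrt{x}$: for any $a>0$ and $b\geq 0$ one has $\sqrt{b}-\sqrt{a}\leq \frac{1}{2\sqrt{a}}(b-a)$. Taking $a=\Vper(\vq(t))=\|\vq\per\|^2$ and $b=\Vper(\vq(t+1))$, and using $\sqrt{\Vper(\vq(t))}=\|\vq(t)\per\|=\|\vq\per\|$ on the conditioning event, gives
\[
\Delta \Wperq \leq \frac{1}{2\|\vq\per\|}\bigl(\Vper(\vq(t+1))-\Vper(\vq(t))\bigr)=\frac{1}{2\|\vq\per\|}\,\Delta\Vperq.
\]

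Next I would convert the drift of $\Vper$ into those of $\V$ and $\Vpara$. By the Pythagorean theorem (\ref{eq:pythagoras}), the identity $\Vper(\vx)=\|\vx\per\|^2=\|\vx\|^2-\|\vx\para\|^2=\V(\vx)-\Vpara(\vx)$ holds for every $\vx\in\bR^{n^2}$; hence it passes through to the one-step differences, giving $\Delta\Vperq=\Delta\Vq-\Delta\Vparaq$ \emph{exactly} (not merely an inequality). Substituting this into the previous display yields the claimed bound.

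The only delicate point is that the square-root tangent-line inequality and the division require $\|\vq\per\|>0$; when $\vq\in\cK$ (so $\vq\per=\vone\cdot 0=\mathbf{0}$) the right-hand side is undefined, but this is harmless, since the lemma is only invoked via Lemma~\ref{lem:Bertsimas} with Lyapunov function $\Wper$ on states where $\Wper(\vq)\geq\kappa>0$. I therefore expect no genuine obstacle: the proof is a one-line concavity estimate composed with the exact orthogonal decomposition of the norm.
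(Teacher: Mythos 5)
Your proof is correct and follows exactly the route the paper indicates (it skips the details, citing concavity of the square root and the Pythagorean identity \eqref{eq:pythagoras}, as in Lemma~7 of \cite{erysri-heavytraffic}): the tangent-line bound $\sqrt{b}-\sqrt{a}\leq\frac{1}{2\sqrt{a}}(b-a)$ applied to $\Vper$, combined with the exact identity $\Delta\Vperq=\Delta\Vq-\Delta\Vparaq$. Your remark about the degenerate case $\|\vq\per\|=0$ is also the right way to read the statement.
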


We will now formally state the state space collapse result.
\begin{proposition} \label{prop:SSC}
Consider a set of switch systems under MaxWeight scheduling algorithm, with the arrival processes
$\va\se(t)$  described in Section \ref{sub:model}, parameterized by $0<\epsilon<1$, such that the mean arrival rate vector is $\vlam^{\epsilon}=(1-\epsilon)\vnu$ for some $\vnu \in \cF$ such that $\numin\triangleq \min\lij \nu\lij >0$.
 The load is then $\rho=(1-\epsilon)$.
Let the variance of the arrival process  be $\left(\vsig^{(\epsilon)}\right)^2$. 
Let $\vq^{(\epsilon)}(t)$ denote the queue lengths process of each system, which is positive recurrent. Therefore, the process $\vq^{(\epsilon)}(t)$ converges to a steady state random vector in distribution, which we denote by $\bvq^{(\epsilon)}$.
Then, 
for each system with $0< \epsilon \leq \numin/2\|\vnu\|$, the steady state queue lengths vector satisfies
\ba
\bE \left [\| \bvq\per \se \|^r\right ] \leq (M_r\se)^r \>\> \forall r \in\{1,2,\ldots\},
\ea
where
\ba
M_r\se= 2^{\frac{1}{r}} \max \left(\frac{8(\|\vlam\se\|^2+\|\vsig\se\|^2+n)}{\numin},
(\sqrt{r}e)^{1/r}
16\frac{r}{e}\frac{na_{\max}}{\numin} \left(na_{\max}+1 \right) \right).
\ea

%
\end{proposition}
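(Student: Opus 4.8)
The plan is to apply the moment bound of Lemma~\ref{lem:Bertsimas} to the nonnegative Lyapunov function $Z(\vq)=\Wperq=\|\vq\per\|$. Everything then reduces to verifying the drift conditions \ref{cond:C1} and \ref{cond:C2} for $\Wper$ with constants $\eta$, $\kappa$, $D$ that do \emph{not} depend on $\epsilon$; the stated bound $M_r\se$ is then just the conclusion of Lemma~\ref{lem:Bertsimas} read off with these constants, its two arguments corresponding respectively to $2\kappa$ (the term of order $(\|\vlam\se\|^2+\|\vsig\se\|^2+n)/\numin$) and to the $D$-dependent term of order $na_{\max}(na_{\max}+1)/\numin$.

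Condition \ref{cond:C2} is immediate: by the reverse triangle inequality $|\Delta\Wperq|\le\|\vq\per(t+1)-\vq\per(t)\|$, and since $\vq\per$ is the projection onto the polar cone, nonexpansiveness (\ref{eq:nonexpansive_projection}) bounds this by $\|\vq(t+1)-\vq(t)\|=\|\va(t)-\vs(t)+\vu(t)\|$, which is at most a constant $D$ of order $na_{\max}$. For \ref{cond:C1} I would work through Lemma~\ref{lem:Wperp_V}, which gives $\Delta\Wperq\le\frac{1}{2\|\vq\per\|}(\Delta\Vq-\Delta\Vparaq)$; by Pythagoras (\ref{eq:pythagoras}) the function $\V$ splits pointwise as $\Vpara+\Vper$, so the numerator is exactly $\Delta\Vperq$. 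Expanding $\Delta\Vq=2\La\vq,\va-\vs+\vu\Ra+\|\va-\vs+\vu\|^2$ and using $\La\vq,\vu\Ra=0$, while lower-bounding $\Delta\Vparaq\ge 2\La\vq\para,\va-\vs+\vu\Ra$ (convexity of the squared norm of the cone projection, whose gradient is $2\vq\para$), and subtracting, the term $-2\La\vq\para,\vu\Ra\le 0$ drops out since $\vq\para$ and $\vu$ are nonnegative. Taking conditional expectations yields
\ba
\bEq{\Delta\Vper}\le 2(1-\epsilon)\La\vq\per,\vnu\Ra-2\La\vq\per,\bE[\vs(t)|\vq]\Ra+K,
\ea
where $K=\bEq{\|\va-\vs+\vu\|^2}$ is of order $\|\vlam\se\|^2+\|\vsig\se\|^2+n$.

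To turn this into genuine negative drift I would exploit the geometry of \cF. Since $\vs$ and $\vnu$ both lie in \cF and, by (\ref{eq:F_perp_K}), differences of points of \cF are orthogonal to $\vq\para\in\cK$, we have $\La\vq\para,\vs\Ra=\La\vq\para,\vnu\Ra$; combined with the MaxWeight inequality $\La\vq,\vs\Ra\ge\La\vq,\vnu\Ra$ (valid because $\vnu\in\mathrm{Conv}(\cS^*)$), this gives $\La\vq\per,\vs\Ra-\La\vq\per,\vnu\Ra=\La\vq,\vs\Ra-\La\vq,\vnu\Ra\ge 0$. The residual $\epsilon$-proportional term is controlled by $2\epsilon|\La\vq\per,\vnu\Ra|\le 2\epsilon\|\vnu\|\,\|\vq\per\|\le\numin\|\vq\per\|$, precisely because $\epsilon\le\numin/2\|\vnu\|$. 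The decisive step is then a geometric lemma asserting that the MaxWeight gap is itself of order $\|\vq\per\|$,
\ba
\La\vq,\vs\Ra-\La\vq,\vnu\Ra\ge c\,\numin\,\|\vq\per\|,
\ea
with $c>1/2$ an absolute constant. Granting it, $\bEq{\Delta\Vper}\le-(2c-1)\numin\|\vq\per\|+K$, and dividing by $2\|\vq\per\|$ makes $\bEq{\Delta\Wper}\le-\eta$ once $\|\vq\per\|\ge\kappa$, for $\eta\asymp\numin$ and $\kappa\asymp K/\numin$, both independent of $\epsilon$ and matching the first argument of $M_r\se$.

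The main obstacle is the gap lemma. The natural route is LP duality for the max-weight matching: optimal potentials $\alpha_i,\beta_j$ satisfy $\alpha_i+\beta_j\ge q\lij$ with equality on $\vs$, so $\La\vq,\vs\Ra-\La\vq,\vnu\Ra=\sum\lij\nu\lij(\alpha_i+\beta_j-q\lij)\ge\numin\sum\lij(\alpha_i+\beta_j-q\lij)$, and the invariance of this last slack sum under adding elements of $\mathcal V_{\cK}$ shows it depends on $\vq$ only through $\vq\per$. However, this slack sum can vanish on nonzero elements of \cKo, so a bound by $\|\vq\per\|$ cannot hold on all of \cKo; it must be recovered using the extra structure carried by an actual projection residual---namely $\vq\ge 0$ together with the complementary-slackness relations $w_i\La\vei,\vq\per\Ra=0$ and $\widetilde w_j\La\vetj,\vq\per\Ra=0$ coming from $\La\vq\para,\vq\per\Ra=0$. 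Making this quantitative, and thereby pinning down $c$, is where the real work lies, and it is exactly here that the hypothesis $\numin>0$ (all ports saturated, CRP violated) is indispensable.
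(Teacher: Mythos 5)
Your overall architecture coincides with the paper's: apply Lemma \ref{lem:Bertsimas} to $Z=\Wper$, verify \ref{cond:C2} via the reverse triangle inequality and nonexpansiveness of the projection (identical to the paper, giving $D=na_{\max}$), and verify \ref{cond:C1} through Lemma \ref{lem:Wperp_V} by bounding $\Delta \V-\Delta\Vpara$; your intermediate reductions (the drop of $\La\vq\per,\vu\Ra\leq 0$, the identity $\La\vq\para,\vs\Ra=\La\vq\para,\vnu\Ra$ from \eqref{eq:F_perp_K}, and the control of the $\epsilon$-term using $\epsilon\leq\numin/2\|\vnu\|$) are all sound and match the paper's. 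But there is a genuine gap: everything rests on your ``gap lemma'' $\La\vq,\vs\Ra-\La\vq,\vnu\Ra\geq c\,\numin\|\vq\per\|$ with $c>1/2$, which you state, do not prove, and explicitly flag as ``where the real work lies.'' Moreover, the route you sketch for it---lower-bounding by $\numin$ times the dual slack sum $\sum\lij(\alpha_i+\beta_j-q\lij)$---is one you yourself observe cannot succeed, since that slack sum vanishes on nonzero elements of $\cKo\cap\mathcal{V}_{\cK}$. Since this inequality is precisely the state-space-collapse mechanism, the proposal as written is not a proof.

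The paper closes exactly this gap with a short, elementary claim: for any $\vq$, the vector $\vnu+\frac{\numin}{\|\vq\per\|}\vq\per$ lies in $\cC$. Indeed each entry is at least $\nu\lij-\numin\geq 0$ because $|q\perij|\leq\|\vq\per\|$, and each row and column sum is at most $1$ because $\La\vq\per,\vei\Ra\leq 0$ and $\La\vq\per,\vetj\Ra\leq 0$ (polarity of $\cKo$ against $\vei,\vetj\in\cK$) while $\La\vnu,\vei\Ra=\La\vnu,\vetj\Ra=1$. Feeding this feasible point into the MaxWeight optimality $\La\vq,\vs\Ra=\max_{\mathbf{r}\in\cC}\La\vq,\mathbf{r}\Ra$ and using $\La\vq\para,\vq\per\Ra=0$ gives $\La\vq,\vs\Ra-\La\vq,\vnu\Ra\geq\frac{\numin}{\|\vq\per\|}\La\vq\para+\vq\per,\vq\per\Ra=\numin\|\vq\per\|$, i.e., your gap lemma with $c=1$; no nonnegativity of $\vq$ and no complementary-slackness bookkeeping are needed, only $\numin>0$ and the polarity relations---which is where the all-ports-saturated hypothesis actually enters. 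With $c=1$ your own computation then produces $\eta=\numin/4$ and $\kappa=4(\|\vlam\se\|^2+\|\vsig\se\|^2+n)/\numin$, and Lemma \ref{lem:Bertsimas} together with Stirling's bound on $r!$ yields the stated $M_r\se$.
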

\begin{remark}
Note that for any $r$, the expressions $M_r$ are upper bounded by a constant not dependant on $\epsilon$ whenever there exists a $\widetilde{\sigma}$ which does not depend on $\epsilon$ such that $\left(\vsig^{(\epsilon)}\right)^2 \leq \widetilde{\sigma}$ for all $\epsilon$. This is why we call this state space collapse. Our notion of state-space collapse considers the system in steady-state, and is hence mathematically different from the state-space collapse result in \cite{ShaWis_11}, although the results are similar in spirit.
\end{remark}
\begin{proof}
We will skip the superscript $\se$ in this proof for ease of notation. Thus, we will use \vqt, \vlam and \vsig to denote $\vq\se(t)$, $\vlam\se$ and $\vsig\se$ respectively.
We will verify both the conditions \ref{cond:C1} and \ref{cond:C2} to apply Lemma \ref{lem:Bertsimas} for the Markov chain \vqt and Lyapunov function $\Wper (\vq(\cdot))$. First we consider condition \ref{cond:C2}.
\ban
| \Delta \Wperq| =& |\|\vq\per(t+1)\|-\|\vq\per(t)\||\>\mathcal{I}(\vq(t)=\vq) \nonumber\\
\stackrel{(a)}{\leq} & \|\vq\per(t+1)-\vq\per(t)\|  \nonumber\\
\stackrel{(b)}{\leq} & \|\vq(t+1)-\vq(t)\| \nonumber\\
=&\sqrt{\sum\lij |\qij(t+1)-\qij(t)|^2} \nonumber\\
\stackrel{(c)}{\leq}&\sqrt{\sum\lij a_{\max}^2} \nonumber\\
\leq &n a_{\max}  \label{eq:delta_Wperp}
\ean
where (a) follows from triangle inequality, i.e., $|\|\vx\|-\|\vy\||\leq \|\vx-\vy\|$ and $\mathcal{I}(.)\leq 1$; (b) follows from nonexpansivity of projection operator (\ref{eq:nonexpansive_projection}); (c) is true because each queue lengths can increase by at most $a_{\max}\geq 1$ due to arrivals and can decrease by at most $1$ due to departures. Thus condition \ref{cond:C2} of Lemma 
is true with $D=n a_{\max}$.

We will now verify \ref{cond:C1}, using Lemma \ref{lem:Wperp_V} by bounding the drifts $\Delta \Vq$ and $\Delta \Vparaq$. 
\ban
\lefteqn{
\bEq{\Delta\Vq}}\nonumber\\
  =& \bEq{\|\vq(t+1)\|^2-\|\vq(t)\|^2} \nonumber\\
= & \bEq{\|\vqt+\va(t)-\vs(t)+\vu(t)\|^2-\|\vqt\|^2} \nonumber\\
= & \bEq{\|\vqt+\va(t)-\vs(t)\|^2+\|\vu(t)\|^2+2\La\vq(t+1)-\vu(t),\vu(t)\Ra-\|\vqt\|^2} \nonumber\\
\stackrel{(a)}{\leq} & \bEq{\|\va(t)-\vs(t)\|^2+2\La\vq(t),\va(t)-\vs(t)\Ra} \nonumber\\
\stackrel{(b)}{=}&  \bEq{\sum\lij ( a\lij^2(t)+s\lij(t)-2a\lij(t)s\lij(t))}+2\La\vq,\vlam-\bEq{\vs(t)}\Ra \nonumber\\
\stackrel{(c)}{=}&  \sum\lij( \lambda\lij^2+\sigma\lij^2)+n -2\bEq{\sum\lij \lambda\lij(t)s\lij(t)} +2\La\vq,\vlam-\bEq{\vs(t)}\Ra \label{eq:delta_V}\\
=&   \|\vlam\|^2+\|\vsig\|^2+n -2(1-\epsilon)\bEq{\sum\lij \nu\lij s\lij(t)}
+2\La\vq,(1-\epsilon)\vnu-\bEq{\vs(t)}\Ra \nonumber\\
\leq& \|\vlam\|^2+\|\vsig\|^2+n
 -2\epsilon \La\vq,\vnu\Ra
+2\La\vq,\vnu-\bEq{\vs(t)}\Ra \nonumber\\
=& \|\vlam\|^2+\|\vsig\|^2+n
 -2\epsilon \La\vq,\vnu\Ra
+2\min_{\bf{r}\in \cC} \La\vq,\vnu-\bf{r}\Ra\label{eq:SSCdelV_intermediate}
\ean
where (a) follows from the fact that $\La\vq(t+1),\vu(t)\Ra=0$ and dropping the $-\|\vu(t)\|^2$ term; (b) is true because $s\lij\in\{0,1\}$. Note that $\bE[a\lij^2(t)]=\bE[a\lij(t)]^2+\text{Var}(a\lij(t))$. Also note that the arrivals in each time slot are independent of the queue lengths and hence are also independent of the service process. These facts and (\ref{eq:maximal_sched}) give (c). 
Since we use MaxWeight scheduling algorithm, from (\ref{eq:MaxWt}), we have (\ref{eq:SSCdelV_intermediate}). In order to bound the last term in (\ref{eq:SSCdelV_intermediate}), we present the following claim.
\begin{claim}
For any $\vq \in \bR^{n^2}$ ,
\ba
\vnu+\frac{\numin}{\|\vq\per\|}\vq\per \in \cC.
\ea
\end{claim}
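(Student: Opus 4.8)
The plan is to verify directly that the candidate vector $\vy \triangleq \vnu+\frac{\numin}{\|\vq\per\|}\vq\per$ lies in $\cC$ by checking the two families of constraints in the definition of the capacity region: the $2n$ supporting-hyperplane inequalities $\La\vy,\vei\Ra\leq 1$ and $\La\vy,\vetj\Ra\leq 1$, together with componentwise nonnegativity $\vy\geq 0$. I would first record the two structural facts that drive the argument. Since $\vnu\in\cF$ we have $\La\vnu,\vei\Ra=1$ and $\La\vnu,\vetj\Ra=1$ for all $i,j$. And since $\vq\per$ is the projection of $\vq$ onto the polar cone $\cKo$, it satisfies $\La\vq\per,\vx\Ra\leq 0$ for every $\vx\in\cK$; because $\vei$ and $\vetj$ are precisely the generators of $\cK$, this yields $\La\vq\per,\vei\Ra\leq 0$ and $\La\vq\per,\vetj\Ra\leq 0$.

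For the hyperplane inequalities I would expand by linearity: $\La\vy,\vei\Ra=\La\vnu,\vei\Ra+\frac{\numin}{\|\vq\per\|}\La\vq\per,\vei\Ra=1+\frac{\numin}{\|\vq\per\|}\La\vq\per,\vei\Ra\leq 1$, since the scalar $\frac{\numin}{\|\vq\per\|}$ is nonnegative and $\La\vq\per,\vei\Ra\leq 0$; the identical computation gives $\La\vy,\vetj\Ra\leq 1$. This is the easy half.

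The nonnegativity is where the specific value of the scaling constant $\numin$ enters. Componentwise, using $\nu\lij\geq\numin$, I have $y\lij=\nu\lij+\frac{\numin}{\|\vq\per\|}q\perij\geq\numin+\frac{\numin}{\|\vq\per\|}q\perij=\numin\bigl(1+\frac{q\perij}{\|\vq\per\|}\bigr)$. Since any coordinate is bounded in absolute value by the Euclidean norm, $|q\perij|\leq\|\vq\per\|$, so $\frac{q\perij}{\|\vq\per\|}\geq -1$, each bracket is nonnegative, and hence $y\lij\geq 0$. I would also dispatch the degenerate case $\vq\per=0$ separately, reading the claim as $\vy=\vnu\in\cF\subseteq\cC$, so that nothing further is needed.

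There is no real obstacle here; the only point requiring care is the matching of scales. The most negative a coordinate of $\frac{\numin}{\|\vq\per\|}\vq\per$ can be is exactly $-\numin$, and this is precisely cancelled by the smallest entry $\nu\lij\geq\numin$ of $\vnu$, which is why the normalization constant is chosen to be $\numin$ rather than anything larger. The conceptual content is simply the observation that membership of $\vq\per$ in the polar cone $\cKo$ forces all row and column sums of the perturbation to be nonpositive, while normalizing $\vq\per$ by its norm keeps the perturbation small enough to remain in the nonnegative orthant.
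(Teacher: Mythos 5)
Your proof is correct and follows essentially the same route as the paper's: nonnegativity via $|q\perij|\leq\|\vq\per\|$ combined with $\nu\lij\geq\numin$, and the hyperplane constraints via $\vq\per\in\cKo$, $\vei,\vetj\in\cK$, and $\La\vnu,\vei\Ra=\La\vnu,\vetj\Ra=1$. Your explicit handling of the degenerate case $\vq\per=0$ is a small addition the paper leaves implicit.
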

\begin{proof}
Since $\left|q\perij\right| \leq \|\vq\per\|$, $\nu\lij+\frac{\numin}{\|\vq\per\|}q\perij \geq \nu\lij-\numin\geq0$ and so ${\vnu+\frac{\numin}{\|\vq\per\|}\vq\per \in \bR^{n^2}_+}$. We know that $\vq\per \in \cKo$ and $\vei \in \cK$, and so $\La\vq\per,\vei \Ra \leq 0$. Thus, for any $i$, we have
\ba
\La\vnu+\frac{\numin}{\|\vq\per\|}\vq\per ,\vei \Ra =& \La\vnu,\vei \Ra+ \frac{\numin}{\|\vq\per\|}\La\vq\per ,\vei \Ra \\
\leq & \La\vnu,\vei \Ra \\
 = & 1
\ea
where the last equality is due to the fact that $\vnu \in \cF$.
Similarly, we can show that ${\La\vnu+\frac{\numin}{\|\vq\per\|}\vq\per ,\vetj \Ra \leq 1}$ for any $j$, proving the claim.
\end{proof}
Using the claim in (\ref{eq:SSCdelV_intermediate}), we get
\ban
\bEq{\Delta\Vq}
\leq&  \|\vlam\|^2+\|\vsig\|^2+n -2\epsilon \La\vq,\vnu\Ra
+2\La\vq,\vnu-\left(\vnu+\frac{\numin}{\|\vq\per\|}\vq\per\right)\Ra \nonumber\\
= & \|\vlam\|^2+\|\vsig\|^2+n -2\epsilon \La\vq,\vnu\Ra
-\frac{2\numin}{\|\vq\per\|} \La\vq\para+\vq\per,\vq\per\Ra \nonumber\\
= & \|\vlam\|^2+\|\vsig\|^2+n -2\epsilon \La\vq,\vnu\Ra
-2\numin\|\vq\per\| \label{eq:SSCdelV}
\ean
where the last equality follows from the fact that $\La\vq\para,\vq\per\Ra=0$. We will now bound the drift $\Delta \Vparaq$.
\ban
\lefteqn{\bEq{\Delta\Vparaq}} \nonumber\\
  =& \bEq{\|\vq\para(t+1)\|^2-\|\vq\para(t)\|^2} \nonumber\\
=& \bEq{\La\vq\para(t+1)+\|\vq\para(t),\vq\para(t+1)-\|\vq\para(t)\Ra} \nonumber\\
=& \bEq{\|\vq\para(t+1)-\vq\para(t)\|^2} +2\bEq{\La\vq\para(t),\vq\para(t+1)-\vq\para(t)\Ra} \nonumber\\
\geq& 2\bEq{\La\vq\para(t),\vq\para(t+1)-\vq\para(t)\Ra} \nonumber\\
=&2\bEq{\La\vq\para(t),\vq(t+1)-\vq(t)\Ra} - 2\bEq{\La\vq\para(t),\vq\per(t+1)-\vq\per(t)\Ra} \nonumber\\
\stackrel{(a)}{\geq} & 2\bEq{\La\vq\para(t),\va(t)-\vs(t)+\vu(t)\Ra} \nonumber\\
\stackrel{(b)}{\geq} & 2\La\vq\para,\vlam\Ra- 2\bEq{\La\vq\para,\vs(t)\Ra} \nonumber\\
= & -2\epsilon\La\vq\para,\vnu\Ra- 2\bEq{\La\vq\para,\vs(t)-\vnu\Ra} \nonumber\\
= & -2\epsilon\La\vq\para,\vnu\Ra \label{eq:SSCdelVpara}
\ean
Equation (a) is true because $\La\vq\para(t),\vq\per(t)\Ra=0$ and $\La\vq\para(t),\vq\per(t+1)\Ra \leq 0$ since $\vq\para(t) \in \cK$ and $\vq\per(t+1) \in \cKo$. All the components of $\vq\para$ and $\vu(t)$ are nonnegative. Using this fact with independence of the arrivals and the queue lengths gives Equation (b). The last equality follows from (\ref{eq:F_perp_K}) since $\vq\para \in \cK \in \mathcal{V}_{\cK}$ and $\vs(t),\vnu \in \cF$ from (\ref{eq:maximal_sched}). Now substituting (\ref{eq:SSCdelV}) and (\ref{eq:SSCdelVpara}) in Lemma \ref{lem:Wperp_V}, we get
\ba
\bEq{\Delta \Wperq} \leq & \frac{1}{2\|\vq\per\|}\left(\|\vlam\|^2+\|\vsig\|^2+n -2\epsilon \La\vq,\vnu\Ra
-2\numin\|\vq\per\| +2\epsilon\La\vq\para,\vnu\Ra \right) \\
= & \frac{\|\vlam\|^2+\|\vsig\|^2+n}{\|\vq\per\|}-\numin-\frac{\epsilon}{\|\vq\per\|}
\La\vq\per,\vnu\Ra\\
\stackrel{(a)}{\leq}  &  \frac{\|\vlam\|^2+\|\vsig\|^2+n}{\|\vq\per\|}-\numin+\epsilon \|\vnu\|\\
\leq  & \frac{\|\vlam\|^2+\|\vsig\|^2+n}{\|\vq\per\|}-\frac{\numin}{2} \text{ whenever } \epsilon \leq \frac{\numin}{2\|\vnu\|} \\
\leq & - \frac{\numin}{4} \text{ for all } \vq \text{ such that } \Wperq\geq \frac{4(\|\vlam\|^2+\|\vsig\|^2+n)}{\numin}
 \ea
where (a) is due to the Cauchy Schwartz inequality $\La\frac{-\vq\per}{\|\vq\per\|},\vnu\Ra \leq \frac{\|\vq\per\|}{\|\vq\per\|}\|\vnu\|$. 
 Thus condition \ref{cond:C1} is valid with $\kappa = \frac{4(\|\vlam\|^2+\|\vsig\|^2+n)}{\numin}$ and $\eta = \frac{\numin}{4}$. Then from Lemma \ref{lem:Bertsimas}, we get for $r=1,2,\ldots$,
 \ba
\bE \left [\| \bvq\per \se \|^r\right ]
\leq & \left(\frac{8(\|\vlam\|^2+\|\vsig\|^2+n)}{\numin}\right)^r+
r!
 \left(16\frac{na_{\max}}{\numin}\right)^r \left(na_{\max}+\frac{\numin}{4} \right)^r \\
\stackrel{(a)}{ \leq }& \left(\frac{8(\|\vlam\|^2+\|\vsig\|^2+n)}{\numin}\right)^r+
\sqrt{r}e
 \left(16\frac{r}{e}\frac{na_{\max}}{\numin} \left(na_{\max}+1 \right) \right)^r  \\
  \leq & 2 \max \left(\frac{8(\|\vlam\|^2+\|\vsig\|^2+n)}{\numin},
(\sqrt{r}e)^{1/r}
16\frac{r}{e}\frac{na_{\max}}{\numin} \left(na_{\max}+1 \right) \right)^r.
\ea
where (a) follows from Stirling's upper bound of the factorial function, $r!\leq e^{1-r} r^{r+\frac{1}{2}}$ and noting that $\numin\leq 1$ follows from the definition of $\numin$ and the capacity region $\cC$. 
The last inequality follows from $a^r+b^r\leq 2 \max(a,b)^r$, proving the proposition.
\end{proof}

Recall that there are $n!$ maximal schedules (permutations or perfect matchings). For each of them, MaxWeight assigns a weight which is the sum of corresponding queue lengths and then picks the one with the maximum weight. In this process, it is equalizing the weights of all the schedules by serving the matching with maximum weight and thereby decreasing it. The cone \cK has the property that if the queue lengths vector \vq is in the cone \cK, we have $w_i$ and $\widetilde{w}_j$ such that $\qij= w_i+\widetilde{w}_j$. This means that all the maximal schedules have the same weight $\sum_iw_i+\sum_j\widetilde{w}_j$ and the MaxWeight algorithm is agnostic between them. Thus, the state space collapse result states that in steady state, MaxWeight is (almost) successful in being able to equalize the weights of all maximal schedules in the heavy traffic limit.
This behavior is very similar to Join-the-shortest queue (JSQ) routing policy in a supermarket checkout system. In such a system, there are a few servers, each with a queue. When a customer arrives to be served, under JSQ policy, (s)he picks the server with the shortest queue. It was shown in \cite{erysri-heavytraffic} that in the heavy traffic limit, the state of this system collapses to a state where all the queues are equal, and thus, JSQ is agnostic between all the queues when such a state space collapse occurs. Here JSQ policy is trying to equalize all the queues by increasing the shortest one, and it is (almost) successful in doing that in steady state in heavy traffic limit.

A natural question in this context is if there is any interpretation to the variables $w_i$ and $\widetilde{w}_j$. These variables 
are the optimal dual variables for the maximum weight matching problem. The maximum weighted perfect matching problem in bipartite graphs (that MaxWeight solves in every time slot) can be written as the integer program (\ref{eq:IP}) and its linear program (LP) relaxation is the linear program (\ref{eq:LP_relaxation}).

\noindent\begin{minipage}{.5\linewidth}
\ban
\max & \sum\lij \qij\sij  \nonumber\\
\text{subject to: } & \sum_i\sij =1 \forall j \nonumber\\
&\sum_j\sij =1 \forall i \nonumber\\
&\sij\in\{0,1\} \forall i,j \label{eq:IP}.
\ean
\end{minipage}%
\begin{minipage}{.5\linewidth}
\ban
\max & \sum\lij \qij\sij  \nonumber\\
\text{subject to: } & \sum_i\sij =1 \forall j \nonumber\\
&\sum_j\sij =1 \forall i \nonumber\\
&\sij\geq 0 \forall i,j \label{eq:LP_relaxation}.
\ean
\end{minipage}
\\

It can be proved that the optimal solution of the LP relaxation (\ref{eq:LP_relaxation}) is identical to the optimal solution of the original integer program (\ref{eq:IP}) \cite{schrijver_combinatorial_book}. The dual of the LP (\ref{eq:LP_relaxation}) is the following.
\ban
\min & \sum_i w_i +\sum_j \widetilde{w}_j \nonumber\\
\text{subject to: } & w_i + \widetilde{w}_j \geq \qij  \forall i,j \label{eq:dual}
\ean
For any perfect matching $\pi$ and its corresponding schedule $\sij$, and for any dual feasible $w_i,\widetilde{w}_j$, we have that $\sum_i q_{i\pi(i)}= \sum\lij \qij\sij \leq \sum_i w_i +\sum_j \widetilde{w}_j$.
Suppose $\sij^*$ is an optimal solution of \ref{eq:IP} and corresponds to a permutation $\pi^*$, and suppose $w_i^*,\widetilde{w}_j^*$ is an optimal solution of \ref{eq:dual}. Then, from strong duality, we have that
\ban
\sum_i q_{i\pi^*(i)}= \sum\lij \qij\sij^* = \sum_i w_i^* +\sum_j \widetilde{w}_j^*.\label{eq:strong_duality}
\ean
Moreover, any $\pi^*$ and $w_i^*,\widetilde{w}_j^*$ that satisfy (\ref{eq:strong_duality}) are optimal solutions for problems (\ref{eq:IP})( or (\ref{eq:LP_relaxation})) and (\ref{eq:dual}) respectively. This means that any optimal perfect matching consists of only links $(i,j)$ such that $\qij=w_i+\widetilde{w}_j$. This property is also called  complementary slackness. The Hungarian assignment algorithm  for solving the MaxWeight matching problem is based on this property. The cone $\cK$, has the special property that if $w_i,\widetilde{w}_j$ is the optimal solution, then for any $(i,j)$, we have $\qij=w_i+\widetilde{w}_j$ and so any perfect matching is an optimal matching and all perfect matchings have the same weight.

The fact that all perfect matchings have same weight when $\vq \in \cK$ can be used to give an alternate proof of Lemma \ref{lem:cone_q_avg}.
The average weight of all the $n!$ perfect matchings is $\frac{1}{n} \sum_{i',j'} q_{i'j'}$. Now consider the matchings that contain the edge $ij$. There are $(n-1)!$ such matchings. The total weight of all these matchings is $(n-1)!q_{ij}+\sum_{i'\neq i}\sum_{j'\neq j} (n-2)! q_{i'j'}$, because every edge $i'j'$ appears in $(n-2)!$ of these $(n-1)!$ matchings. Since all the matchings have same weight, equating the average weight of these $(n-1)!$ matchings to the average of all the matchings, we have
\ba
q_{ij}+\frac{1}{n-1}\sum_{i'\neq i}\sum_{j'\neq j}  q_{i'j'}=&\frac{1}{n} \sum_{i',j'} q_{i'j'} \\
q_{ij}+\frac{1}{n-1}\left(\sum_{i'}\sum_{j'}  q_{i'j'} -\sum_{j'=1}^n q_{ij'} - \sum_{i'=1}^n q_{i'j}  +q_{ij} \right)=&\frac{1}{n} \sum_{i',j'} q_{i'j'} \\
q_{ij}\left(1+\frac{1}{n-1}\right) -\frac{1}{n-1}\left( \sum_{j'=1}^n q_{ij'}+ \sum_{i'=1}^n q_{i'j} \right)= & \sum_{i',j'} q_{i'j'}\left(\frac{1}{n}-\frac{1}{n-1}\right)\\
n q_{ij} - \left( \sum_{j'=1}^n q_{ij'}+ \sum_{i'=1}^n q_{i'j} \right)= & -\sum_{i',j'} q_{i'j'}\left(\frac{1}{n}\right) \quad \text{when } n>1,
\ea
which gives Lemma \ref{lem:cone_q_avg}.

\section{Asymptotically tight Upper and Lower bounds under MaxWeight policy}\label{sec:MaxWt_UB}

In the previous section, we have shown that the queue length vector collapses within the cone \cK in the steady state. We will use this result to obtain lower and upper bounds on the average queue lengths under MaxWeight algorithm. The lower and upper bounds differ only in $o(1/\epsilon)$ and so match in the heavy traffic limit.

We will obtain these bounds by equating the drift of certain carefully chosen functions equal to zero in steady-state. We first define a few Lyapunov-type functions and their drifts, in addition to the already defined $\Vq =\|\vq\|^2$. 
The following lemma states that all these Lyapunov functions have finite expectations in steady state. 
\begin{gather*}
\Viq \triangleq \sum_i \left(\sum_j\qij \right)^2 \quad
\Voq \triangleq \sum_j \left(\sum_i \qij \right)^2 \quad
\Vtq \triangleq  \left(\sum\lij \qij \right)^2 \\
\begin{aligned}
\Delta \Viq \triangleq & [\Vi(\vq(t+1))-\Vi(\vq(t))]\>\mathcal{I}(\vq(t)=\vq)  \\
\Delta \Voq \triangleq & [\Vo(\vq(t+1))-\Vo(\vq(t))]\>\mathcal{I}(\vq(t)=\vq) \nonumber\\
\Delta \Vtq \triangleq & [\Vt(\vq(t+1))-\Vt(\vq(t))]\>\mathcal{I}(\vq(t)=\vq) \\
\end{aligned}
\end{gather*}

\begin{lemma}\label{lem:V_bounded}
Consider the switch under MaxWeight scheduling algorithm.
For any arrival rate vector \vlam in the interior of the capacity region $\vlam \in \text{int}(\cC)$,
the steady state means $\bE[\V(\bvq)]$, $\bE[\Vi(\bvq)]$, $\bE[\Vo(\bvq)]$ and $\bE[\Vt(\bvq)]$ are finite.
\end{lemma}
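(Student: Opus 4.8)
The plan is to apply the Foster--Lyapunov / Hajek machinery already available in the paper, namely Lemma \ref{lem:Hajek} (or its moment-bound strengthening Lemma \ref{lem:Bertsimas}), to the Lyapunov function $\V(\vq)=\|\vq\|^2$, and then deduce finiteness of $\bE[\Vi(\bvq)]$, $\bE[\Vo(\bvq)]$, $\bE[\Vt(\bvq)]$ from finiteness of $\bE[\V(\bvq)]$ by pure linear-algebra domination. The key point is that $\Vi$, $\Vo$ and $\Vt$ are all quadratic forms in $\vq$ that are controlled by $\V$: by Cauchy--Schwarz, $\Vi(\vq)=\sum_i\left(\sum_j\qij\right)^2\leq \sum_i n\sum_j\qij^2 = n\,\V(\vq)$, and identically $\Voq\leq n\,\V(\vq)$, while $\Vtq=\left(\sum\lij\qij\right)^2\leq n^2\sum\lij\qij^2 = n^2\,\V(\vq)$. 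Hence it suffices to establish $\bE[\V(\bvq)]=\bE[\|\bvq\|^2]<\infty$, and the other three follow by taking expectations in these deterministic inequalities.

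For the single remaining task, finiteness of $\bE[\|\bvq\|^2]$, I would first verify the two drift conditions \ref{cond:C1} and \ref{cond:C2} for $Z(\vq)=\|\vq\|$ (so that $Z^2=\V$). Condition \ref{cond:C2} is immediate and essentially identical to the bound \eqref{eq:delta_Wperp}: since each queue changes by at most $a_{\max}$ per slot, $|\Delta Z|\leq \|\vq(t+1)-\vq(t)\|\leq na_{\max}=:D$ almost surely. For condition \ref{cond:C1}, I would reuse the drift computation for $\Delta\Vq$ from the proof of Proposition \ref{prop:SSC}: the calculation through \eqref{eq:SSCdelV_intermediate} gives
\ba
\bEq{\Delta\Vq}\leq \|\vlam\|^2+\|\vsig\|^2+n+2\min_{\mathbf{r}\in\cC}\La\vq,\vlam-\mathbf{r}\Ra.
\ea
Because $\vlam\in\text{int}(\cC)$, there is a $\delta>0$ with the ball of radius $\delta$ around $\vlam$ contained in $\cC$; choosing $\mathbf{r}=\vlam+\delta\,\vq/\|\vq\|$ yields $\min_{\mathbf{r}}\La\vq,\vlam-\mathbf{r}\Ra\leq -\delta\|\vq\|$, so $\bEq{\Delta\Vq}\leq B-2\delta\|\vq\|$ for a constant $B$ not depending on $\vq$. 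Converting this quadratic drift into a drift on $Z=\|\vq\|$ via concavity of the square root (exactly as in Lemma \ref{lem:Wperp_V}, with the subspace replaced by the origin) gives $\bEq{\Delta Z}\leq \frac{1}{2\|\vq\|}(B-2\delta\|\vq\|)\leq -\delta/2$ whenever $\|\vq\|\geq B/\delta$, verifying \ref{cond:C1} with $\eta=\delta/2$ and $\kappa=B/\delta$.

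With both conditions in hand, Lemma \ref{lem:Bertsimas} (applied with $r=2$) gives $\bE[Z(\bvq)^2]=\bE[\|\bvq\|^2]=\bE[\V(\bvq)]<\infty$, and then the three Cauchy--Schwarz dominations above finish the lemma. The main obstacle I anticipate is the drift step for general $\vlam\in\text{int}(\cC)$: unlike in Proposition \ref{prop:SSC}, here $\vlam$ need not lie on the face $\cF$, so I must argue the strict interiority bound $\min_{\mathbf{r}\in\cC}\La\vq,\vlam-\mathbf{r}\Ra\leq -\delta\|\vq\|$ directly from $\vlam\in\text{int}(\cC)$ rather than from the structure of $\cF$; this is where the MaxWeight optimality \eqref{eq:MaxWt} and throughput-optimality of the algorithm are really being invoked. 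Everything else is routine, and since the lemma only claims finiteness (not any quantitative bound), one could alternatively cite the standard result that MaxWeight renders $\vqt$ positive recurrent with finite second moments in $\text{int}(\cC)$ \cite[Section 10.1]{srikantleibook}; I would present the self-contained drift argument for completeness.
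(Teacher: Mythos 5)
Your proposal is correct and follows essentially the same route as the paper's proof: verify conditions \ref{cond:C1} and \ref{cond:C2} for $Z(\vq)=\|\vq\|$ by combining the drift bound on $\V$ from \eqref{eq:delta_V} with concavity of the square root, use MaxWeight optimality together with $\vlam\in\text{int}(\cC)$ to extract strictly negative drift, invoke the Hajek/Bertsimas machinery to conclude $\bE[\V(\bvq)]<\infty$, and then dominate $\Vi,\Vo,\Vt$ by constant multiples of $\V$. The only (immaterial) differences are your choice of comparison point $\mathbf{r}=\vlam+\delta\vq/\|\vq\|$ where the paper uses $\vlam+\epsilon_1\vone$ with $\|\vq\|_1\geq\|\vq\|$, and your sharper Cauchy--Schwarz constants in the final domination step.
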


The lemma is proved in Appendix \ref{sec:V_bounded_proof}.
We will now state and prove the main result of this paper.

\begin{theorem}\label{thm:UB_maximalface}
Consider a set of switch systems under MaxWeight scheduling algorithm, with the arrival processes
$\va\se(t)$ described in Section \ref{sub:model}, parameterized by $0<\epsilon<1$, such that the mean arrival rate vector is $\vlam^{\epsilon}=(1-\epsilon)\vnu$ for some $\vnu \in \cF$ such that $\numin\triangleq \min\lij \nu\lij >0$.
 The load is then $\rho=(1-\epsilon)$.
Let the variance of the arrival process be $\left(\vsig^{(\epsilon)}\right)^2$.
The queue length process $\vq^{(\epsilon)}(t)$ for each system converges in distribution to the steady state random vector $\bvq^{(\epsilon)}$.
For each system with $0< \epsilon \leq \numin/2\|\vnu\|$, the steady state average queue length satisfies
\ba
  \left(1-\frac{1}{2n}\right)\frac{\left\|\vsig\se\right\|^2}{\epsilon} -\Bi(\epsilon,n) \leq \bE \left [ \sum\lij\bqij\se \right ] \leq \left(1-\frac{1}{2n}\right)\frac{\left\|\vsig\se\right\|^2}{\epsilon} +\Bii(\epsilon,n)
\ea
where
\ba
\Bi(\epsilon,n) =
-\frac{n\epsilon}{2}
+n
+3n^{\left(2-\frac{1}{r}\right)}
\epsilon ^{\left(-\frac{1}{r}\right)}  M_r\se
\quad \text{and}\quad
\Bii(\epsilon,n) =
\frac{n(1+\epsilon)}{2}
+ 2n^{\left(2-\frac{1}{r}\right)}\epsilon^{\left(-\frac{1}{r}\right)}
M_r\se
\ea
for any $r\in\{2,3,\ldots\}$. The terms $\Bi(\epsilon,n)$ and $\Bii(\epsilon,n)$
are both
 $o\left(\frac{1}{\epsilon}\right)$, i.e., $\lim_{\epsilon\downarrow 0} \epsilon \Bi(\epsilon,n) = 0$ and $\lim_{\epsilon\downarrow 0} \epsilon \Bii(\epsilon,n) = 0$. Therefore, in the heavy traffic limit as $\epsilon\downarrow 0$ which means as the mean arrival rate $\vlam^{\epsilon}\rightarrow \frac{1}{n}\vone$,
if $\left(\vsig^{(\epsilon)}\right)^2 \rightarrow \vsig^2$, we have
\ba
\lim_{\epsilon\downarrow 0}  \epsilon \bE \left [ \sum\lij\bqij\se \right ] =  \left(1-\frac{1}{2n}\right)\left\|\vsig\right\|^2
\ea
\end{theorem}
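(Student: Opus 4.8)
The plan is to read off $\bE[\sum\lij\bqij\se]$ from the zero steady-state drift of a single quadratic Lyapunov function: the squared norm of the orthogonal projection of \vq onto the subspace $\mathcal{V}_{\cK}$. Writing $P$ for that projection (onto the span of the vectors $\vei$ and $\vetj$), the relevant function is $\|P\vq\|^2$, and I would first record that it equals the combination $\frac1n\Vi(\vq)+\frac1n\Vo(\vq)-\frac1{n^2}\Vt(\vq)$. This holds because $\Vi,\Vo,\Vt$ depend on \vq only through the row sums $\La\vq,\vei\Ra$, column sums $\La\vq,\vetj\Ra$, and total sum, which are unchanged under $P$; and on $\mathcal{V}_{\cK}$ the identity underlying Lemma \ref{lem:cone_q_avg} (whose proof only uses $x\lij=w_i+\widetilde{w}_j$) gives $\|\vx\|^2=\frac1n\Vi(\vx)+\frac1n\Vo(\vx)-\frac1{n^2}\Vt(\vx)$. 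By Lemma \ref{lem:V_bounded} this function has finite steady-state mean, so its drift averages to zero. Since $P$ is linear and $P\vq(t+1)=P\vq(t)+P(\va(t)-\vs(t)+\vu(t))$, the steady-state drift identity splits into a noise term $\bE\|P(\va-\vs)\|^2$, a negative-drift term $2\La P\vq,\vlam-\bE[\vs]\Ra$, and unused-service corrections.

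Two consequences of the orthogonality $\mathcal{V}_{\cK}\perp\mathcal{V}_{\cF}$ (together with $\cF=\text{Conv}(\cS^*)$) drive the computation. First, since every maximal schedule \vs and its mean lie in \cF, the difference $\vs-\bE[\vs]\in\mathcal{V}_{\cF}$ and hence $P(\vs-\bE[\vs])=0$: the entire order-$n$ fluctuation of the permutation schedule is annihilated, leaving only the arrival randomness. Second, $\vnu-\bE[\vs]\in\mathcal{V}_{\cF}$ as well, so $2\La P\vq,\vlam-\bE[\vs]\Ra=2\La P\vq,(1-\epsilon)\vnu-\bE[\vs]\Ra=-2\epsilon\La P\vq,\vnu\Ra$; and because $\La\vq\para,\vr\Ra$ is constant over $\vr\in\cF$ and equals $\frac1n\sum\lij q\paraij$, one has $\La P\vq,\vnu\Ra=\frac1n\sum\lij q\lij$ up to an $O(\|\vq\per\|)$ error. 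This produces exactly the driving term $-\frac{2\epsilon}{n}\bE[\sum\lij\bqij]$.

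For the noise term, writing $\va-\vs=(\va-\vlam)-(\vs-\bE[\vs])+(\vlam-\bE[\vs])$, the middle piece is killed by $P$, the last projects to $-\epsilon P\vnu$ contributing only $\epsilon^2\|P\vnu\|^2=O(\epsilon^2)$, and independence of arrivals and schedule removes the cross term, leaving $\bE\|P(\va-\vs)\|^2=\bE\|P(\va-\vlam)\|^2+O(\epsilon^2)$. Now $\bE\|P(\va-\vlam)\|^2=\text{tr}(P\,\text{Cov}(\va))=\sum\lij P_{ij,ij}\,\sigma\lij^2$, and by the row/column permutation symmetry of $\mathcal{V}_{\cK}$ all diagonal entries $P_{ij,ij}$ are equal; since $\text{tr}(P)=\text{dim}(\mathcal{V}_{\cK})=2n-1$, each equals $\frac{2n-1}{n^2}$. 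Hence the noise equals $\frac{2n-1}{n^2}\|\vsig\|^2$, and equating it to $\frac{2\epsilon}{n}\bE[\sum\lij\bqij]$ and multiplying by $\frac n2$ yields the claimed $\left(1-\frac1{2n}\right)\|\vsig\|^2$.

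The main obstacle is controlling the unused-service corrections, and this is exactly where Proposition \ref{prop:SSC} is indispensable. The dangerous cross term is $2\La P\vq,\vu\Ra$; using $q\lij u\lij=0$ componentwise gives $\La P\vq,\vu\Ra=\La(P-I)\vq\per,\vu\Ra$, so $|\La P\vq,\vu\Ra|\le\|\vq\per\|\sqrt{\sum\lij u\lij}$. Since the zero drift of the linear function $\sum\lij q\lij$ forces $\bE[\sum\lij u\lij]=\epsilon n$ in steady state, and $\bE[\|\bvq\per\|^r]$ is bounded uniformly in $\epsilon$ by Proposition \ref{prop:SSC}, Cauchy--Schwarz makes this term $O(\sqrt{\epsilon})$, negligible after scaling by $\epsilon$; the remaining $\vu$-terms are $O(\epsilon\,\text{poly}(n))$. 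To obtain the non-asymptotic bounds $\Bi$ and $\Bii$ I would instead carry each such error explicitly and bound it by Hölder's inequality against $\bE[\|\bvq\per\|^r]\le (M_r\se)^r$, optimizing over $r$; the resulting $\epsilon^{-1/r}$ dependence is precisely what appears in $\Bi(\epsilon,n)$ and $\Bii(\epsilon,n)$, both of which are $o(1/\epsilon)$, so the upper and lower bounds coincide in the heavy-traffic limit.
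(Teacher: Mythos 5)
Your argument is correct, and although it rests on the same two pillars as the paper's proof --- setting the steady-state drift of one particular quadratic to zero, and using Proposition \ref{prop:SSC} to control the unused-service cross term --- the way you organize the computation is genuinely different and in several places cleaner. The starting point is that your Lyapunov function is the paper's up to a constant: writing $P$ for the orthogonal projection onto $\mathcal{V}_{\cK}$, your identity gives $\|P\vq\|^2=\frac1n\Viq+\frac1n\Voq-\frac1{n^2}\Vtq=\frac1n\Vuq$. The paper arrives at $\Vu$ by reverse-engineering the requirement that the term \Tiv vanish on the cone, and then expands $\Vi+\Vo-\frac1n\Vt$ by hand into the four terms \Ti--\Tiv; you instead exploit linearity and self-adjointness of $P$ together with the orthogonality $\mathcal{V}_{\cK}\perp\mathcal{V}_{\cF}$ and $\cS^*\subset\cF$. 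This buys you (i) the \emph{exact} identity $\La P\vq,\vnu\Ra=\La \vq,P\vnu\Ra=\frac1n\sum\lij q\lij$ (so the $O(\|\vq\per\|)$ hedge in your second paragraph is unnecessary), replacing the computation of \Ti; (ii) a one-line evaluation of the variance term as $\mathrm{tr}\left(P\,\mathrm{Cov}(\va)\right)=\frac{2n-1}{n^2}\|\vsig\|^2$, replacing the three separate calculations in \Tii; and (iii) a conceptual explanation of why the order-$n$ fluctuation of the random permutation never enters, namely $P(\vs-\bE[\vs])=0$. The one substantive divergence is in the unused-service bookkeeping: the paper groups $\vu$ with $\bvq^+$ so that \Tiv becomes $2\La P\bvq^+,\vu\Ra$ (times $n$) and kills it using $\bq\lij^+u\lij=0$ together with Lemma \ref{lem:cone_q_avg}, whereas you keep $2\La P\vq,\vu\Ra$ and use $q\lij u\lij=0$ to rewrite it as $\La(P-I)\vq\per,\vu\Ra$; both routes then reduce to H\"older against $\bE[\|\bvq\per\|^r]\le(M_r\se)^r$ and $\bE[\sum\lij u\lij(\bvq)]=n\epsilon$, and both give the required $o(1/\epsilon)$ with the same $\epsilon^{-1/r}M_r\se$ dependence. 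Your grouping does leave behind the extra cross term $2\La P(\va-\vs),P\vu\Ra$, which is bounded in magnitude by $2(na_{\max}+\sqrt n)\,\bE[\sum\lij u\lij]=O(n^2\epsilon)$ and is therefore harmless for the theorem, but it means your explicit $\Bi(\epsilon,n)$ and $\Bii(\epsilon,n)$ would carry slightly different (in $n$, slightly larger) constants than those stated; the leading term $\left(1-\frac1{2n}\right)\|\vsig\se\|^2/\epsilon$ and the heavy-traffic limit come out identical.
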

\begin{proof}
Fix an $0< \epsilon \leq \numin/2\|\vnu\|$ and we consider the system with index $\epsilon$. For simplicity of notation,  we again skip the superscript $(\epsilon)$ in this proof and use $\bvq$ to denote the steady state queue length vector.
We will use $\bva$ to denote the arrival vector in steady state, which is identically distributed to the  random vector $\va(t)$ for any time $t$.
We will use $\vs(\bvq)$ and $\vu(\bvq)$ to denote the schedule and unused service to show their dependence on the queue lengths. 
We will use $\bvq^+$ to denote $\bvq+\bva-\vs(\bvq)+\vu(\bvq)$, which is the queue lengths vector at time $t+1$ if it was $\bvq$ at time $t$. Clearly, $\bvq^+$ and $\bvq$ have the same distribution.

Define a new function $\Vuq$ and its drift as follows.
\ba
\Vuq=& \Viq+\Voq-\frac{1}{n}\Vtq\\
 = & \sum_i \left(\sum_j\qij \right)^2 +\sum_j \left(\sum_i \qij \right)^2 - \frac{1}{n} \left(\sum\lij \qij \right)^2
\\
\Delta \Vuq \triangleq & [\Vu(\vq(t+1))-\Vu(\vq(t))]\>\mathcal{I}(\vq(t)=\vq) \\
 = & \Delta \Viq+\Delta \Voq-\frac{1}{n}\Delta \Vtq
\ea
Since $-\frac{1}{n}\Vtq \leq \Vuq \leq \Viq+\Voq$, the steady state mean $\bE[\Vu(\bvq)]$ is finite from Lemma \ref{lem:V_bounded}. Therefore, the mean drift of $\Vu(.)$ in steady state is zero, i.e.,
\ba
\bE[\Delta \Vu(\bvq)] = \bE[[\Vu(\vq(t+1)\hspace{-.2em})\hspace{-.2em}-\hspace{-.2em} \Vu(\vq(t)\hspace{-.2em})]\>\mathcal{I}(\vq(t)\hspace{-.2em}=\hspace{-.2em}\bvq)] = \bE[\Vu(\bvq^+\hspace{-.2em})] \hspace{-.2em}-\hspace{-.2em} \bE[\Vu(\bvq)] = \bE[\Vu(\bvq)] \hspace{-.2em}-\hspace{-.2em} \bE[\Vu(\bvq)] = 0
\ea
\ban
0 = & \bE[\Delta \Vu(\bvq)] \nonumber\\
= & \bE[\Delta \Vi(\bvq)] +\bE[\Delta \Vo(\bvq)]-\frac{1}{n}\bE[\Delta \Vt(\bvq)] \label{eq:delta_Vu}
\ean
Expanding the drift of $\Vi(.)$, we get
\ba
\lefteqn{\bE[\Delta \Vi(\bvq)]}\\
= & \bE[\Vi(\bvq+\bva-\vs(\bvq)+\vu(\bvq))-\Vi(\bvq)]\\
= & \bE\left[\sum_i \left(\sum_j (\bqij+\baij-\sij(\bvq)+\uij(\bvq)) \right)^2 -\sum_i \left(\sum_j\bqij \right)^2 \right]\\
%
%
= & \bE\left[\sum_i \left(\sum_j(\baij-\sij(\bvq))\right)^2  +2\sum_i\left(\sum_j(\bqij+\baij-\sij(\bvq))\right)\left(\sum_{j'} u_{ij'}(\bvq)\right)\right] \\
&+\bE \left[+ \sum_i \left(\sum_j\uij(\bvq)  \right)^2+2\sum_i\left(\sum_j\bqij\right)\left(\sum_{j'}(\overline{a}_{ij'}-s_{ij'}(\bvq))\right) \right] \\
= & \bE\left[\sum_i \left(\sum_j(\baij-\sij(\bvq))\right)^2 - \sum_i \left(\sum_j\uij(\bvq)  \right)^2 +2\sum_i\left(\sum_j\bqij^+\right)\left(\sum_{j'} u_{ij'}(\bvq)\right)\right] \\
&+2\bE \left[\sum_i\left(\sum_j\bqij\right)\left(\sum_{j'}(\overline{a}_{ij'}-s_{ij'}(\bvq))\right) \right].
\ea
Similarly expanding drifts of $\Vo(.)$ and $\Vt(.)$ and substituting in (\ref{eq:delta_Vu}), we get the following expression. Since this is a lengthy equation, we split into various terms which we denote by \Ti,\Tii,\Tiii and \Tiv. For simplicity of notation, we suppress all the dependencies in terms of \bvq, \bva, \vs(\bvq) and \vu(\bvq).
\ban
\Ti=\Tii+\Tiii+\Tiv \label{eq:T1234}
\ean
where
\ba
\Ti=&
2\bE \left[\sum_i\left(\sum_j\bqij\right)\left(\sum_{j'}(s_{ij'}(\bvq)-\overline{a}_{ij'})\right) \right]
+2\bE \left[\sum_j\left(\sum_i\bqij\right)\left(\sum_{i'}(s_{i'j}(\bvq)-\overline{a}_{i'j})\right) \right]\\
 & - \frac{2}{n} \bE\left[\left(\sum\lij\bqij\right)\left(\sum_{i'j'}(s_{i'j'}(\bvq)-\overline{a}_{i'j'})\right) \right]\\
\Tii=&
\bE\left[\sum_i \left(\sum_j(\baij-\sij(\bvq))\right)^2\right]
+\bE\left[\sum_j \left(\sum_i(\baij-\sij(\bvq))\right)^2\right]
-\frac{1}{n} \bE\left[\left(\sum\lij(\baij-\sij(\bvq))\right)^2\right]\\
\Tiii=&
-\bE\left[\sum_i \left(\sum_j\uij(\bvq)  \right)^2\right]
-\bE\left[\sum_j \left(\sum_i\uij(\bvq)  \right)^2\right]
+\frac{1}{n} \bE\left[\left(\sum\lij\uij(\bvq)  \right)^2\right] \\
\Tiv = &
2\bE\left[\sum_i\left(\sum_j\bqij^+\right)\left(\sum_{j'} u_{ij'}(\bvq)\right)\right]
+2\bE\left[\sum_j\left(\sum_i\bqij^+\right)\left(\sum_{i'} u_{i'j}(\bvq)\right)\right]\\
& -\frac{2}{n}\bE\left[\left(\sum\lij\bqij^+\right)\left(\sum_{i'j'} u_{i'j'}(\bvq)\right)\right]
\ea

We will now bound each of the four terms.
The schedule in each time slot is maximal (\ref{eq:maximal_sched}) and so $\sum_is\lij=1, \sum_js\lij=1$ and $\sum\lij s\lij=n$. Noting that the arrivals are independent of queue lengths, we can simplify the term \Ti as follows.
\ban
\Ti=&
2\bE \left[\sum_i\left(\sum_j\bqij\right)\left(1-\sum_{j'}\lambda_{ij'}\right) \right]
+2\bE \left[\sum_j\left(\sum_i\bqij\right)\left(1-\sum_{i'}\lambda_{i'j}\right) \right] \nonumber\\
 & - \frac{2}{n} \bE\left[\left(\sum\lij\bqij\right)\left(n-\sum_{i'j'}\lambda_{i'j'}\right) \right] \nonumber\\
\stackrel{(a)}{=}&2\bE \left[\sum_i\epsilon \left(\sum_j\bqij\right) \right]
+2\bE \left[\sum_j\epsilon \left(\sum_i\bqij\right) \right] - \frac{2}{n} \bE\left[n\epsilon \left(\sum\lij\bqij\right)\right] \nonumber\\
=&2\epsilon \bE \left[\sum\lij\bqij \right], \nonumber
\ean
 where (a) follows from the fact that $\sum_{j}\lambda_{ij} =1-\epsilon$ and $\sum_{i}\lambda_{ij} =1-\epsilon$ since $\vlam^{\epsilon}=(1-\epsilon)\vnu$ and $\vnu \in \cF$.

Thus, from (\ref{eq:T1234}), we have
\ban
2\epsilon \bE \left[\sum\lij\bqij \right] = \Tii+\Tiii+\Tiv.
\label{eq:LHST234}
\ean
Now the rest of the proof involves bounding the term $\Tii, \Tiii$ and $\Tiv$. We start with the term $\Tii$. Consider the first term of $\Tii$. Again noting that the schedules are maximal (\ref{eq:maximal_sched}), we get
\ba
\bE\left[\sum_i \left(\sum_j(\baij-\sij(\bvq))\right)^2\right] = & \sum_i \bE\left[ \left(\sum_j\baij-1\right)^2\right] \\
= & \sum_i \bE\left[ \left(\sum_j\baij-(1-\epsilon)-\epsilon\right)^2\right] \\
= & \sum_i \bE\left[ \left(\sum_j\baij-(1-\epsilon)\right)^2\right]+\sum_i\epsilon^2-\sum_i2\epsilon\bE\left[ \left(\sum_j\baij-(1-\epsilon)\right)\right] \\
\stackrel{(a)}{=}& n\epsilon^2 +\sum_i \text{Var}\left(\sum_j\baij\right)\\
\stackrel{(b)}{=}& n\epsilon^2 +\sum\lij \sigma^2\lij\\
=& n\epsilon^2+\|\vsig\|^2,
\ea
where (a) is true because $\bE[\sum_j\baij]=(1-\epsilon)$; (b) follows from the independence of the arrival processes across ports.
Similarly, we can show that the second term in \Tii evaluates to\\ $\bE\left[\sum_i \left(\sum_j(\baij-\sij(\bvq))\right)^2\right] = n\epsilon^2+\|\vsig\|^2$. The last term can likewise be evaluated as follows.
\ba
\frac{1}{n}\bE\left[ \left(\sum\lij(\baij-\sij(\bvq))\right)^2\right] = & \frac{1}{n} \bE\left[ \left(\sum\lij\baij-n\right)^2\right] \\
= & \frac{1}{n} \bE\left[ \left(\sum\lij\baij-n(1-\epsilon)-n\epsilon\right)^2\right] \\
= & \frac{1}{n}\bE\left[ \left(\sum\lij\baij-n(1-\epsilon)\right)^2\right]+n\epsilon^2-2\epsilon\bE\left[ \left(\sum\lij\baij-n(1-\epsilon)\right)\right] \\
=& n\epsilon^2 +\frac{1}{n} \text{Var}\left(\sum\lij\baij\right)\\
=& n\epsilon^2 +\frac{1}{n}\sum\lij \sigma^2\lij\\
=& n\epsilon^2+\frac{1}{n}\|\vsig\|^2,
\ea
Putting all the terms of \Tii together, we get
\ban
\Tii  =& n\epsilon^2+\left(2-\frac{1}{n}\right)\|\vsig\|^2 \label{eq:T2}. 
\ean

Since $\sum\lij\qij \in\bZ_+$, we have $\sum\lij\qij \leq (\sum\lij\qij)^2$. Using the fact that $\bE \left [ (\sum\lij\bqij)^2 \right ]$ is finite from Lemma \ref{lem:V_bounded}, we have that $\bE \left [ \sum\lij\bqij \right ]$ is finite and so its drift is zero in steady state. Thus, we get
\ban
0=& \bE\left [\left[ \sum\lij\qij(t+1)-\sum\lij\qij(t)\right]\>\mathcal{I}(\vq(t)=\bvq)\right ] \nonumber\\
=& \bE\left [ \sum\lij\overline{a}\lij-\sum\lij s\lij(\bvq)+ \sum\lij u\lij(\bvq)\right ] \nonumber\\
\bE\left [  \sum\lij u\lij(\bvq)\right ]=& n- n(1-\epsilon) \nonumber\\
=& n \epsilon \label{eq:unepsilon}
\ean
We will now bound the term \Tiii. Since $u\lij(t)\leq s\lij(t)$, we have $\sum_iu\lij\leq1, \sum_ju\lij\leq1$ and $\sum\lij u\lij\leq n$. Therefore,
\begin{alignat}{3}
-\bE\left[\sum_i \left(\sum_j\uij(\bvq)  \right)^2\right]
-\bE\left[\sum_j \left(\sum_i\uij(\bvq)  \right)^2\right]
 & \leq \Tiii \leq &&
\frac{1}{n} \bE\left[\left(\sum\lij\uij(\bvq)  \right)^2\right]
 \nonumber\\
-\bE\left[\sum_i \left(\sum_j\uij(\bvq)  \right)\right]
-\bE\left[\sum_j \left(\sum_i\uij(\bvq)  \right)\right]
 & \leq \Tiii \leq &&
 \frac{1}{n} \bE\left[n\left(\sum\lij\uij(\bvq)  \right)\right]
 \nonumber\\
-2n \epsilon & \leq \Tiii \leq &&n \epsilon \label{eq:T3}
\end{alignat}

We now consider the term \Tiv. It can be rewritten as follows, and can be split into two parts, one each corresponding to $\bvq^+\para$ and $\bvq^+\per$, where $\bvq^+\para$ means $\left(\bvq^+\right)\para$ and similarly $\bvq^+\per$.
\ba
\Tiv
=& 2\bE\left[\sum\lij \uij(\bvq) \left( \sum_{j'}\bq_{ij'}^+ + \sum_{i'}\bq_{i'j}^+ -\frac{1}{n} \sum_{i'j'}\bq_{i'j'}^+ \right)\right]\\
=& 2\bE\left[\sum\lij \uij(\bvq) \left( \sum_{j'}\bq_{\parallel ij'}^+ + \sum_{i'}\bq_{\parallel i'j}^+ -\frac{1}{n} \sum_{i'j'}\bq_{\parallel i'j'}^+ \right)\right]\\
&+ 2\bE\left[\sum\lij \uij(\bvq) \left( \sum_{j'}\bq_{\perp ij'}^+ + \sum_{i'}\bq_{\perp i'j}^+ -\frac{1}{n} \sum_{i'j'}\bq_{\perp i'j'}^+ \right)\right]
\ea
Since the vector $\bvq\para^+$ is in cone \cK by definition, Lemma \ref{lem:cone_q_avg} is applicable.
Recall that when $\uij(t)=1$, $\qij(t+1)=0$. Thus, when $\uij(\bvq)=1$, we have
\ban
\bqij^+=&0 \nonumber\\
\bq_{\parallel ij}^+ =&- \bq_{\perp ij}^+ \nonumber\\
\frac{1}{n}\sum_{j'=1}^n\bq_{\parallel ij'}^+ + \frac{1}{n}\sum_{i'=1}^n \bq_{\parallel i'j}^+ -\frac{1}{n^2} \sum_{i'=1}^n\sum_{j'=1}^n \bq_{\parallel i'j'}^+ =&- \bq_{\perp ij}^+ \nonumber
\ean
Therefore, we get
\ba
\uij(\bvq) \left( \sum_{j'}\bq_{\parallel ij'}^+ + \sum_{i'}\bq_{\parallel i'j}^+ -\frac{1}{n} \sum_{i'j'}\bq_{\parallel i'j'}^+ \right) = -n\uij(\bvq) \bq_{\perp ij}^+
\ea
and the term \Tiv reduces to
\ban
\Tiv
=& 2\bE\left[\sum\lij \uij(\bvq) \left(-n\bq_{\perp ij}^+ + \sum_{j'}\bq_{\perp ij'}^+ + \sum_{i'}\bq_{\perp i'j}^+ -\frac{1}{n} \sum_{i'j'}\bq_{\perp i'j'}^+ \right)\right] \nonumber\\
=& 2\bE\left[\La\vu(\bvq),-n\bvq\per^+  +\sum_i\La\bvq\per^+,\vei\Ra \vei +\sum_j\La\bvq\per^+,\vetj\Ra \vetj -\frac{1}{n}\La\bvq\per^+,\vone\Ra\vone  \Ra\right] \label{eq:T4_motivation}.
\ean
Term \Tiv is a critical term to bound and our choice of the Lyapunov function $\Vu(.)$ is motivated primarily to obtain  
\eqref{eq:T4_motivation}. We explain the motivation in detail at the end of this section.
From state space collapse, we know that $\bvq\per^+$  is bounded. We will now use this result to show that \Tiv is $o(\epsilon)$.
Since $\bvq\per^+ \in \cKo$ and $\vei, \vetj, \vone \in \cK$ for all $i,j$, we have that $\La\bvq\per^+,\vei\Ra \leq 0$, $\La\bvq\per^+,\vetj\Ra \leq 0$ and $\La\bvq\per^+,\vone\Ra \leq 0$. Moreover all components of $\vu,\vei,\vetj$ and $\vone$ take values $0$ and $1$. Therefore,
\ba
\Tiv \leq &
 2\bE\left[\La\vu(\bvq),-n\bvq\per^+  -\frac{1}{n}\La\bvq\per^+,\vone\Ra\vone  \Ra\right] \nonumber\\
\stackrel{(a)}{\leq}&
 2\left(\bE\left[\|\vu(\bvq)\|_{\rt}^{\rt}\right] \right)^{\frac{1}{\rt}}
 \left(\bE\left[\left\|-n\bvq\per^+  -\frac{1}{n}\La\bvq\per^+,\vone\Ra\vone \right\|_r^r \right]\right)^{\frac{1}{r}}
 \nonumber\\
\stackrel{(b)}{\leq}&
 2\left(n\epsilon \right)^{\frac{1}{\rt}}
 \left(\bE\left[\left(n\|\bvq\per^+\|_r +\frac{1}{n}\left|\La\bvq\per^+,\vone\Ra\right| \|\vone \|_r \right)^r \right]\right)^{\frac{1}{r}}
 \nonumber\\
\stackrel{(c)}{\leq}&
 2\left(n\epsilon \right)^{\frac{1}{\rt}}
 \left(\bE\left[\left(n\|\bvq\per^+\|_r +\frac{1}{n} \|\bvq\per^+\|_r \|\vone \|_{\rt}  \|\vone \|_r \right)^r \right]\right)^{\frac{1}{r}}
 \nonumber\\
\stackrel{(d)}{=}&
 2\left(n\epsilon \right)^{\frac{1}{\rt}}
 \left(\bE\left[\left(n\|\bvq\per^+\|_r +\frac{(n^2)^{\left(\frac{1}{\rt}+\frac{1}{r}\right)}}{n} \|\bvq\per^+\|_r  \right)^r \right]\right)^{\frac{1}{r}}
 \nonumber\\
\stackrel{(e)}{=}&
 4n^{\left(1+\frac{1}{\rt}\right)}\epsilon^{\frac{1}{\rt}}
 \left(\bE\left[
 \|\bvq\per^+\|_r ^r \right]\right)^{\frac{1}{r}}
 \nonumber\\
\stackrel{(f)}{\leq}&
 4n^{\left(1+\frac{1}{\rt}\right)}\epsilon^{\frac{1}{\rt}}
 \left(\bE\left[
 \|\bvq\per^+\|_2 ^r \right]\right)^{\frac{1}{r}} \qquad \text{ for } r\geq 2
 \nonumber\\
\stackrel{(g)}{\leq}&
 4n^{\left(1+\frac{1}{\rt}\right)}\epsilon^{\frac{1}{\rt}}
M_r\se \qquad\qquad\qquad\ \   \text{ for } r\geq 2
 \nonumber\\
\leq&
 4n^{\left(2-\frac{1}{r}\right)}\epsilon^{\left(1-\frac{1}{r}\right)}
M_r\se \qquad\qquad \quad \ \ \text{ for } r\geq 2
 \nonumber
\ea
where $\|\vx\|_r$ denotes the $\ell_r$ norm of a vector \vx, and $r,\rt \in (1,\infty)$ satisfy $1/r+1/\rt=1$.
Inequality (a) follows from the
H\"older's inequality for random vectors. 
Cauchy-Schwartz inequality (which is a special case of H\"older's inequality) may also be used to obtain the same bound in heavy traffic limit. However, in the non-heavy traffic limit, H\"older's inequality gives a tighter bound.
Since $\uij\in\{0,1\}$, from (\ref{eq:unepsilon}), we have $\bE\left[\|\vu(\bvq)\|_{\rt}^{\rt}\right]= \bE\left[\sum\lij (u\lij(\bvq))^{\rt}\right] =\bE\left[\sum\lij u\lij(\bvq)\right]=n\epsilon$. This fact along with using triangle inequality on the second term gives (b). Inequality (c) again follows using H\"older's inequality for vectors. The $\ell_r$ norm of vector \vone is $\|\vone \|_r=n^{2/r}$, this gives (d). Since $\frac{1}{\rt}+\frac{1}{r}=1$, we have (e). For any vector $\vx$, if $0<r<r'$, we have $\|\vx\|_{r'}\leq \|\vx\|_{r}$, 
and this gives (f) and (g) follows from state space collapse in Proposition \ref{prop:SSC}. The last inequality follows from $1/r+1/\rt=1$.
 Similarly, we can lower bound \Tiv as follows.
%

\ban
 \Tiv \geq &
2\bE\left[\La\vu(\bvq),-n\bvq\per^+  +\sum_i\La\bvq\per^+,\vei\Ra \vei +\sum_j\La\bvq\per^+,\vetj\Ra \vetj \Ra\right] \nonumber\\
\geq &
-2
\left(\bE\left[\|\vu(\bvq)\|_{\rt}^{\rt}\right] \right)^{\frac{1}{\rt}} \left(\bE\left[\left\|-n\bvq\per^+  +\sum_i\La\bvq\per^+,\vei\Ra \vei +\sum_j\La\bvq\per^+,\vetj\Ra \vetj \right\|^r_r\right]\right)^{\frac{1}{r}}
  \nonumber\\
\geq &
-2
\left(n\epsilon \right)^{\frac{1}{\rt}} \left(\bE\left[\left( \left\|n\bvq\per^+ \right\|_r +\left\|\sum_i \La\bvq\per^+,\vei\Ra \vei \right\|_r +\left\|\sum_j \La\bvq\per^+,\vetj\Ra \vetj \right\|_r\right)^r\right]\right)^{\frac{1}{r}}.\label{eq:T4LB_temp}\ean
Let's now focus on the middle term in the expectation above. From the definition of $\vei$, we have
\ba
\left\|\sum_i \La\bvq\per^+,\vei\Ra \vei \right\|_r = & \left(\sum_i n \left|\La\bvq\per^+,\vei\Ra\right|^r \right)^{\frac{1}{r}} \\
= & \left(\sum_i n \left(\sum_j q\perij^+\right)^r \right)^{\frac{1}{r}} \\
\stackrel{(a)}{\leq}& \left(\sum_i n^r \sum_j \left( q\perij^+\right)^r \right)^{\frac{1}{r}} \\
=& n \left\|\bvq\per^+ \right\|_r.
\ea
For any $(x_1,\ldots,x_n)\in\bR^n$ and $r\geq 1$, from Jensen's inequality, we have $\left(\frac{\sum_ix_i}{n}\right)^r \leq \frac{\sum_ix_i^r}{n}$. This gives inequality (a) above. We have a similar bound for the last term in expectation in (\ref{eq:T4LB_temp}). Using both these bounds, the lower bound on \Tiv becomes,
\ba
\Tiv \geq &
-6n^{\left(1+\frac{1}{\rt}\right)}
\epsilon ^{\frac{1}{\rt}} \left(\bE\left[\left( \left\|\bvq\per^+ \right\|_r  \right)^r\right]\right)^{\frac{1}{r}}
  \nonumber\\
\geq&
-6n^{\left(1+\frac{1}{\rt}\right)}
\epsilon ^{\frac{1}{\rt}} \left(\bE\left[\left( \left\|\bvq\per^+ \right\|_2  \right)^r\right]\right)^{\frac{1}{r}} \qquad \text{ for } r\geq 2
  \nonumber\\
\geq&
-6n^{\left(1+\frac{1}{\rt}\right)}
\epsilon ^{\frac{1}{\rt}}  M_r\se \qquad \text{ for } r\geq 2
  \nonumber\\
\geq&
-6n^{\left(2-\frac{1}{r}\right)}
\epsilon ^{\left(1-\frac{1}{r}\right)}  M_r\se \qquad \text{ for } r\geq 2
  \nonumber
  \ea
%
Combining the lower and upper bounds on $\Tiv$, for $r\geq 2$, we have
\ban
-6n^{\left(2-\frac{1}{r}\right)}
\epsilon ^{\left(1-\frac{1}{r}\right)}  M_r\se
\leq \Tiv \leq
 4n^{\left(2-\frac{1}{r}\right)}\epsilon^{\left(1-\frac{1}{r}\right)}
M_r\se. \label{eq:T4}
\ean
Using (\ref{eq:T2}),(\ref{eq:T3}) and (\ref{eq:T4}) to bound (\ref{eq:LHST234}) and reintroducing the superscript $\se$, we get the theorem.
\end{proof}

We will now present the motivation for the choice of the function $\Vu(.)$. First consider a discrete-time single server (G/G/1) queue, $q(t)$ that evolves according to $q(t+1)=q(t)+a(t)-s(t)+u(t)$. The queue $\phi(t)$ in Section \ref{sec:ULB} is an example.
Similar to \eqref{eq:LB_single_server2}, we can obtain tight lower and upper bounds on mean queue length in steady state by setting the drift of $\bE[\bq^2]$ to be zero in steady state, i.e, $\bE[\bq^2(t+1)]=\bE[\bq^2(t)]$. Such a bound is called Kingman bound. See \cite[Section 10.1]{srikantleibook}.
When expanded, this equation again gives four terms, similar to  the terms $\Ti,\Tii,\Tiii$ and $\Tiv$. The fourth term \Tiv then is $u(\bq)\bq^+$, which is zero from the definition of unused service. This is an important step in obtaining tight bounds.

Next, consider a load balancing system, similar to a super market checkout lanes. There are $n$ servers with a separate queue for each server. Whenever a user arrives into the system, (s)he picks one of the servers and joins the corresponding queue. We consider `Join the shortest queue'(JSQ) policy, in which each user joins the queue with the shortest length. Ties are broken uniformly at random. The queue length at server $i$ then evolves according to $q_i(t+1)=q_i(t)+a_i(t)-s_i(t)+u_i(t)$. It was shown in \cite{erysri-heavytraffic} that the JSQ policy has minimum steady state sum queue lengths in heavy traffic. This was done by first showing that the queue lengths collapse to a single dimension where they are all equal. A tight upper bound is then obtained by setting the drift of the quadratic function $\bE[(\sum_i \bq_i)^2]$ to be zero in steady state. When this equation is expanded, we again have four terms and the fourth one being of the form $(\sum_iu_i(\bq))(\sum_{i'}\bq_{i'}^+)$. This is not zero in general because of the cross terms. However, when the state is such that all the queue lengths are equal, this term is zero. This is easy to see by considering the term $u_i(\bq)(\sum_{i'}\bq_{i'}^+)$. When $u_i=1$, we have that $\bq_{i}^+=0$ and when all the queues are equal, for any $i'$, $\bq_{i'}^+=0$.

Therefore, in all these systems, when using a quadratic Lyapunov function, the fourth term \Tiv is the most important and challenging one to bound correctly. Usually, it should be zero if state space collapse is such that $\bvq\per^+=\bf{0}$.
However, for the switch system, if we use Lyapunov functions $\Vi(.)$ or $\Vo(.)$ or $\Vt(.)$ or $\Vi(.)+\Vo(.)$, we do not have the property that $\Tiv=0$ when $\bvq\per^+=\bf{0}$. Armed with Lemma \ref{lem:cone_q_avg}, we add the additional $-\Vt(.)$ to $\Vi(.)+\Vo(.)$ to obtain the Lyapunov function $\Vu(.)$.
We have shown in \eqref{eq:T4_motivation} that \Tiv is zero when $\bvq \in\cK$ (since $\bvq\per^+=\bf{0}$).
The key idea in our upper bound proof is the choice of the function $\Vu(.)$. Essentially, we picked the function $\Vu(.)$ so that it matches with the geometry of the cone \cK in the sense that if the queue length vector is in the cone $\cK$, the fourth term \Tiv is zero.

\section{Uniformly loaded switch under Bernoulli traffic}
In this section, we consider the switch system when all the ports have Bernoulli traffic with same arrival rate. The lower and upper bound expressions then have much simple form.
More precisely, for the system with index $\epsilon$ , for every input-output pair (i,j), the arrival process  $\aij\se(t)$ is a Bernoulli process with rate $\lambda \lij = (1-\epsilon)/n$. In other words, the rate vector approaches the vector $\vnu=\vone/n\in\cF$ on the face \cF as $\epsilon\rightarrow 0$. Then, clearly the variance vector for the system with index $\epsilon$ is $\left(\vsig\se\right)^2=\frac{1-\epsilon}{n}(1-\frac{1-\epsilon}{n})\vone$ with $\left\|\vsig\se\right\|^2=(1-\epsilon)(n-(1-\epsilon))$ and it converges to $\vsig^2=\frac{n-1}{n^2}\vone$. Moreover, $a_{\max}=1$ and $\numin=\frac{1}{n}$. Using these values, we can restate Propositions \ref{prop:Universal_LB} and \ref{prop:SSC}, and Theorem \ref{thm:UB_maximalface} as follows:
\begin{theorem}\label{thm:uniform_arrivals}
Consider a set of switch systems with the Bernoulli arrival processes
$\va\se(t)$ parameterized by $0<\epsilon<1$, such that the mean arrival rate vector is $\vlam^{\epsilon}=\frac{1-\epsilon}{n}\vone$.
Fix a scheduling policy under which the switch system is stable for any $0<\epsilon<1$.
Let $\vq^{(\epsilon)}(t)$ denote the queue lengths process under this policy for each system. Suppose that this process converges in distribution to a steady state random vector $\bvq^{(\epsilon)}$.
Then, for each of these systems, the average queue length is lower bounded by
\ba
\bE \left [\sum\lij \bqij \se \right ] \geq \frac{(1-\epsilon)^2}{2\epsilon}(n-1)
\ea
Therefore, in the heavy-traffic limit as $\epsilon \downarrow 0$, we have
\ba
\liminf_{\epsilon \downarrow 0} \epsilon \bE \left [\sum\lij \bqij \se \right ] \geq \frac{n-1}{2}
\ea

Now consider the same switch systems operating under the MaxWeight scheduling algorithm.
The queue length process $\vq^{(\epsilon)}(t)$ of each system is positive recurrent and so converges to a steady state random vector in distribution $\bvq^{(\epsilon)}$.
Then,
for each system with $0< \epsilon \leq 1/2n$, the steady state queue lengths vector collapses into the cone \cK in the sense that it satisfies
\ba
\bE \left [\| \bvq\per \se \|^r\right ] \leq (\widetilde{M}_r)^r \>\> \forall r \in\{1,2,\ldots\}, \quad \text{where}\quad
\widetilde{M}_r=
(2\sqrt{r}e)^{1/r}
16\frac{r}{e}n^2 \left(n+1 \right) .
\ea
Therefore, the steady state average queue length satisfies
\ba
  \frac{1}{\epsilon} \left(n-\frac{3}{2}+\frac{1}{2n}\right)-\widetilde{\Bi}(\epsilon,n) \leq \bE \left [ \sum\lij\bqij\se \right ] \leq \frac{1}{\epsilon} \left(n-\frac{3}{2}+\frac{1}{2n}\right)+\widetilde{\Bii}(\epsilon,n)
\ea
where
\ba
\widetilde{\Bi}(\epsilon,n) =&
\left(1-\frac{\epsilon}{2}\right)\left(n-2+\frac{1}{n}\right)
+n-\frac{1}{2}
+3n^{\left(2-\frac{1}{r}\right)}
\epsilon ^{\left(-\frac{1}{r}\right)} \widetilde{M}_r
\quad \text{and}\quad\\
\widetilde{\Bii}(\epsilon,n) =&
-\left(1-\frac{\epsilon}{2}\right)\left(n-2+\frac{1}{n}\right)
+\frac{n+1}{2}
+ 2n^{\left(2-\frac{1}{r}\right)}\epsilon^{\left(-\frac{1}{r}\right)}
\widetilde{M}_r
\ea
for any $r\in\{2,3,\ldots\}$. The terms $\widetilde{\Bi}(\epsilon,n)$ and $\widetilde{\Bii}(\epsilon,n)$
are both
 $o\left(\frac{1}{\epsilon}\right)$.
 In the heavy traffic limit as $\epsilon\downarrow 0$ which means as the mean arrival rate $\vlam^{\epsilon}\rightarrow \frac{1}{n}\vone$, we have
\ba
\lim_{\epsilon\downarrow 0}  \epsilon \bE \left [ \sum\lij\bqij\se \right ] =  \left(n-\frac{3}{2}+\frac{1}{2n}\right).
\ea
Thus, MaxWeight algorithm has optimal queue length scaling in the heavy traffic limit.
\end{theorem}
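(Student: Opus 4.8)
The plan is to derive this theorem as a direct specialization of Proposition~\ref{prop:Universal_LB}, Proposition~\ref{prop:SSC}, and Theorem~\ref{thm:UB_maximalface}, so that no new probabilistic argument is needed and everything reduces to substituting the problem-specific constants and simplifying. First I would record those constants. For Bernoulli arrivals of rate $\lambda\lij=(1-\epsilon)/n$ one has $\vnu=\vone/n\in\cF$, $\numin=1/n$, $a_{\max}=1$, and $\sigma\lij^2=\frac{1-\epsilon}{n}\left(1-\frac{1-\epsilon}{n}\right)$, so summing over the $n^2$ entries gives $\left\|\vsig\se\right\|^2=(1-\epsilon)\bigl(n-(1-\epsilon)\bigr)$. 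I would also note $\|\vnu\|=\frac{1}{n}\|\vone\|=1$, whence the hypothesis $\epsilon\le\numin/2\|\vnu\|$ of the two MaxWeight results becomes exactly $\epsilon\le 1/2n$, matching the stated range.

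For the universal lower bound I would substitute $\left\|\vsig\se\right\|^2=(1-\epsilon)(n-(1-\epsilon))$ into the bound $\frac{\|\vsig\se\|^2}{2\epsilon}-\frac{n(1-\epsilon)}{2}$ of Proposition~\ref{prop:Universal_LB}; the key algebraic identity is $n-(1-\epsilon)-n\epsilon=(n-1)(1-\epsilon)$, which collapses the expression to $\frac{(1-\epsilon)^2(n-1)}{2\epsilon}$. Multiplying by $\epsilon$ and letting $\epsilon\downarrow 0$ then yields the claimed heavy-traffic lower bound $\frac{n-1}{2}$.

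For the state-space-collapse constant I would substitute $a_{\max}=1$ and $\numin=1/n$ into $M_r\se$ from Proposition~\ref{prop:SSC}. The second argument of the maximum becomes $(\sqrt{r}e)^{1/r}16\frac{r}{e}n^2(n+1)$, and absorbing the prefactor $2^{1/r}$ gives exactly $\widetilde{M}_r=(2\sqrt{r}e)^{1/r}16\frac{r}{e}n^2(n+1)$. To justify discarding the first argument I would simplify it using the clean identity $\|\vlam\se\|^2+\left\|\vsig\se\right\|^2+n=(1-\epsilon)^2+(1-\epsilon)(n-1+\epsilon)+n=n(2-\epsilon)$, so that the first term equals $8n^2(2-\epsilon)<16n^2$. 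Since $(\sqrt{r}e)^{1/r}\frac{r}{e}\ge\frac{1}{e}$ for $r\ge 1$, the second argument is at least $\frac{16}{e}n^2(n+1)$, which exceeds $16n^2$ whenever $n+1\ge e$, i.e.\ for every $n\ge 2$; hence the second argument dominates, $\widetilde{M}_r\ge M_r\se$ is $\epsilon$-free, and $\bE[\|\bvq\per\se\|^r]\le(\widetilde{M}_r)^r$ follows.

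Finally, for the two-sided bound I would substitute $\left\|\vsig\se\right\|^2$ into the leading term $\left(1-\frac{1}{2n}\right)\frac{\|\vsig\se\|^2}{\epsilon}$ of Theorem~\ref{thm:UB_maximalface}. Writing $\frac{\|\vsig\se\|^2}{\epsilon}=\frac{n-1}{\epsilon}+(2-n)-\epsilon$ and using $\left(1-\frac{1}{2n}\right)(n-1)=n-\frac{3}{2}+\frac{1}{2n}$ splits the leading term into $\frac{1}{\epsilon}\left(n-\frac{3}{2}+\frac{1}{2n}\right)$ plus the $\epsilon$-order correction $\left(1-\frac{1}{2n}\right)(2-n-\epsilon)$, which I would fold into the error terms (with $\widetilde{M}_r$ replacing $M_r\se$ throughout $\Bi,\Bii$). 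The \emph{only} step requiring genuine care is then verifying that this rearrangement reproduces the stated $\widetilde{\Bi},\widetilde{\Bii}$ exactly; concretely one checks $-\frac{n\epsilon}{2}+n+\left(1-\frac{1}{2n}\right)(n-2+\epsilon)=\left(1-\frac{\epsilon}{2}\right)\left(n-2+\frac{1}{n}\right)+n-\frac{1}{2}$ and the analogous identity for the upper bound, both of which hold after expansion. Since $\widetilde{M}_r$ is constant in $\epsilon$, the dominant error term $n^{2-1/r}\epsilon^{-1/r}\widetilde{M}_r$ is $o(1/\epsilon)$ for every $r\ge 2$, so taking $\epsilon\downarrow 0$ gives $\lim \epsilon\,\bE[\sum\lij\bqij\se]=\left(1-\frac{1}{2n}\right)(n-1)=n-\frac{3}{2}+\frac{1}{2n}$; comparing this with the lower-bound limit $\frac{n-1}{2}$ shows the ratio is $2-\frac{1}{n}<2$ and both are $\Theta(n)$, establishing optimal scaling.
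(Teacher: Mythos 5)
Your proposal is correct and follows exactly the route the paper itself takes: the paper offers no separate proof of this theorem but simply states that it is obtained by substituting the Bernoulli-specific constants ($\vnu=\vone/n$, $\numin=1/n$, $a_{\max}=1$, $\|\vsig\se\|^2=(1-\epsilon)(n-(1-\epsilon))$) into Propositions \ref{prop:Universal_LB} and \ref{prop:SSC} and Theorem \ref{thm:UB_maximalface}. Your algebra checks out, and your explicit verification that the second argument of the maximum in $M_r\se$ dominates (so that $\widetilde{M}_r$ is a valid $\epsilon$-free bound) is a detail the paper leaves implicit.
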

Thus, in the heavy traffic limit, we have a universal lower bound on the ($\epsilon$ scaled) average queue lengths that is $\Omega(n)$ and the MaxWeight policy achieves this bound within a factor less than $2$.
Since we are interested in the asymptotics  both in term of number of ports, $n$ and distance from boundary of the capacity region, $\epsilon$, there are several possible limits in which the system can be studied. Heavy traffic limit is one such asymptotic, where we first let the arrival rate approach the boundary of the capacity region and look at the scaling of average queue length in terms of $n$. Another set of asymptotic regimes is when $\epsilon \rightarrow 0$ and $n\rightarrow \infty$ simultaneously. This can be studied by setting $\epsilon=n^{-\beta}$ for $\beta>0$.
Such a limit was studied in \cite{shah2012optimal,zhong2014scaling} for scheduling algorithms that are different from the MaxWeight algorithms studied here.
The universal lower bound in such a limit is $\Omega(n^{(1+\beta)})$. It is now easy to see the following corollary.
\begin{corollary}
Consider a sequence switch systems with Bernoulli arrivals, indexed by $n$. The $n^{\text{th}}$ system has mean arrival rate vector $\vlam^{(n)}=\frac{1-\gamma_nn^{-\beta}}{n}\vone$ with $\beta>0$ and $\gamma_n>0$ is a sequence that is $\Theta(1)$. The load is $\rho^{(n)} = 1-\gamma_n n^{-\beta}$. Fix a scheduling policy under which the switch system is stable for any $n>0$.
Suppose that the queue lengths process $\vq^{(n)}(t)$ process converges in distribution to a steady state random vector $\bvq^{(n)}$.
Then, for each of these systems, the average queue length is lower bounded by
\ba
\bE \left [\sum\lij \bqij ^{(n)} \right ] \geq \frac{(1-\gamma_n n^{-\beta})^2}{2\gamma_n}n^\beta(n-1)
\ea
and so is $\Omega(n^{(1+\beta)})$.

Under the MaxWeight scheduling policy, the queue lengths process $\vq^{(n)}(t)$ process is positive recurrent and so converges to a steady state random vector in distribution $\bvq^{(n)}$.
When $2\gamma_n\leq n^{(\beta-1)}$,
the steady state average queue length satisfies
\ban
\frac{n^{(1+\beta)}}{\gamma_n}-\Biii(n)
  \leq \bE \left [ \sum\lij\bqij^{(n)} \right ] \leq
\frac{n^{(1+\beta)}}{\gamma_n}+\Biv(n) \label{eq:uniform_enc}
  \quad \text{for } \beta>4
\ean
where $\Biii(n)$ and $\Biv(n)$ are $o\left(n^{(1+\beta)}\right)$. Thus, under the MaxWeight algorithm, the average sum queue lengths is $\Theta(n^{(1+\beta)})$ and so has optimal scaling.
\end{corollary}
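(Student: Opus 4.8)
The plan is to derive the entire corollary by specializing Theorem~\ref{thm:uniform_arrivals} to the coupled scaling $\epsilon = \gamma_n n^{-\beta}$, so that the heavy-traffic index and the size index become linked. First I would record that, under this substitution, the applicability hypothesis of the MaxWeight part of Theorem~\ref{thm:uniform_arrivals}, namely $0 < \epsilon \le 1/(2n)$, reads $\gamma_n n^{-\beta} \le 1/(2n)$, which is exactly the stated condition $2\gamma_n \le n^{\beta-1}$; thus the theorem applies verbatim in this regime. The universal lower bound is then immediate: plugging $\epsilon = \gamma_n n^{-\beta}$ into $\bE[\sum\lij \bqij\se] \ge \frac{(1-\epsilon)^2}{2\epsilon}(n-1)$ produces the displayed bound $\frac{(1-\gamma_n n^{-\beta})^2}{2\gamma_n} n^\beta (n-1)$, and since $\gamma_n = \Theta(1)$ and $(1-\gamma_n n^{-\beta})^2 \to 1$, this quantity is $\Theta\!\left(n^\beta (n-1)\right) = \Omega(n^{1+\beta})$.

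For the MaxWeight bounds I would expand the common leading term. Writing $\tfrac1\epsilon = n^\beta/\gamma_n$,
\[
\frac{1}{\epsilon}\Big(n - \tfrac32 + \tfrac{1}{2n}\Big) = \frac{n^{1+\beta}}{\gamma_n} - \frac32\,\frac{n^\beta}{\gamma_n} + \frac12\,\frac{n^{\beta-1}}{\gamma_n}.
\]
The first summand is the advertised main term $n^{1+\beta}/\gamma_n$, while the remaining two are $O(n^\beta)$. I would then simply \emph{define}
\[
\Biii(n) \triangleq \frac32\,\frac{n^\beta}{\gamma_n} - \frac12\,\frac{n^{\beta-1}}{\gamma_n} + \widetilde{\Bi}(\gamma_n n^{-\beta}, n), \qquad \Biv(n) \triangleq -\frac32\,\frac{n^\beta}{\gamma_n} + \frac12\,\frac{n^{\beta-1}}{\gamma_n} + \widetilde{\Bii}(\gamma_n n^{-\beta}, n),
\]
so that the two-sided bound of the corollary is algebraically identical to that of Theorem~\ref{thm:uniform_arrivals}. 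It then only remains to verify that $\Biii(n)$ and $\Biv(n)$ are $o(n^{1+\beta})$.

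The main obstacle is controlling the state-space-collapse term inside $\widetilde{\Bi}$ and $\widetilde{\Bii}$. The purely polynomial pieces (the $O(n^\beta)$ from the expansion, together with the $(1-\epsilon/2)(n-2+1/n)$ and $(n\pm 1)/2$ contributions of $\widetilde{\Bi},\widetilde{\Bii}$) are all $o(n^{1+\beta})$ for free once $\gamma_n=\Theta(1)$. The dangerous summand is $c\, n^{2-1/r}\epsilon^{-1/r}\widetilde{M}_r$ with $c$ equal to $2$ or $3$. Since $\widetilde{M}_r = \Theta(n^3)$ for fixed $r$ and $\epsilon^{-1/r} = \gamma_n^{-1/r} n^{\beta/r}$, this term is of order $n^{2 - 1/r + \beta/r + 3} = n^{5 - 1/r + \beta/r}$. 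For it to be $o(n^{1+\beta})$ I need $5 - \tfrac1r + \tfrac{\beta}{r} < 1 + \beta$, i.e.\ $\beta\big(1 - \tfrac1r\big) > 4 - \tfrac1r$, i.e.\ $\beta > \frac{4r-1}{r-1}$. Because the integer $r \ge 2$ is free and $\frac{4r-1}{r-1}$ decreases to $4$ as $r \to \infty$, this requirement can be satisfied precisely when $\beta > 4$: one fixes $r$ large enough that $\frac{4r-1}{r-1} < \beta$. With such an $r$ the collapse term is $o(n^{1+\beta})$, hence so are $\Biii(n)$ and $\Biv(n)$, and the displayed two-sided bound together with the $\Theta(n^{1+\beta})$ conclusion follows. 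This threshold computation is exactly where the hypothesis $\beta > 4$ originates, and it is the only genuinely nontrivial estimate in the argument.
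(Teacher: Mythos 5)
Your proposal is correct and follows essentially the same route as the paper: specialize Theorem \ref{thm:uniform_arrivals} with $\epsilon=\gamma_n n^{-\beta}$ (noting $2\gamma_n\le n^{\beta-1}$ is exactly $\epsilon\le 1/2n$), absorb the lower-order pieces of $\frac{1}{\epsilon}(n-\tfrac32+\tfrac{1}{2n})$ and of $\widetilde{\Bi},\widetilde{\Bii}$ into $\Biii,\Biv$, and observe that the dominant state-space-collapse term is $\Theta(n^{5+\frac{\beta-1}{r}})$, which is $o(n^{1+\beta})$ for suitably large $r$ precisely when $\beta>4$. Your threshold condition $\beta>\frac{4r-1}{r-1}$ is algebraically identical to the paper's $4+\frac{\beta-1}{r}<\beta$, so nothing further is needed.
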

\begin{proof}
The universal lower bound directly follows from Theorem \ref{thm:uniform_arrivals} using $\epsilon^{(n)}=\gamma_n n^{-\beta}$. We will now prove the second part of the corollary which is under the MaxWeight policy.
Sine $2\gamma_n\leq n^{(\beta-1)}$, we have $0< \epsilon^{(n)} \leq 1/2n$
and Theorem \ref{thm:uniform_arrivals} is applicable. Therefore, we have (\ref{eq:uniform_enc}) with
\ba
\Biii(n) =&
\left(\frac{3n^\beta-n^{\beta-1}}{2\gamma_n}\right)
+\left(1\hspace{-.25em}-\hspace{-.25em}\frac{\gamma_n n^{-\beta}}{2}\right)\hspace{-.25em}\left(n-2+\frac{1}{n}\right)
+n-\frac{1}{2}
+48\left(\frac{2\sqrt{r}e}{\gamma_n}\right)^{1/r}
\frac{r}{e}
n^{\left(2-\frac{1}{r}+\frac{\beta}{r}\right)}
n^2 \left(n+1 \right),\\
\Biv(n) =&
\left(\frac{-3n^\beta+n^{\beta-1}}{2\gamma_n}\right)
-\left(1 \hspace{-.25em}-\hspace{-.25em} \frac{\gamma_nn^{-\beta}}{2}\right)\hspace{-.25em}\left(n-2+\frac{1}{n}\right)
+\frac{n+1}{2}
+ 32\left(\frac{2\sqrt{r}e}{\gamma_n}\right)^{1/r}
\frac{r}{e}n^{\left(2-\frac{1}{r}+\frac{\beta}{r}\right)}
n^2 \left(n+1 \right)
\ea
Clearly all but the last term above are $o\left(n^{(1+\beta)}\right)$. The last terms are $\Theta(n^{\left(5+\frac{\beta-1}{r}\right)})$. For any $\beta>4$, we can pick $r$ large enough so that $4+\frac{\beta-1}{r}<\beta$ and so we have that $\Biii(n)$ and $\Biv(n)$ are $o\left(n^{(1+\beta)}\right)$.
\end{proof}


\section{Conclusion \label{sec:conc}}

We have obtained a characterization of the heavy-traffic behavior of the sum queue length in steady-state in an $n\times n$ switch operating under the MaxWeight scheduling policy when all ports are saturated. We then considered the special case of uniform Bernoulli traffic and studied the switch in an asymptotic regime where the load increases simultaneously with the number of ports. We showed that the steady-state average queue lengths are within a factor less than $2$ of a universal lower bound.
The result settles one version of a conjecture regarding the performance of the MaxWeight policy. A number of extensions can be considered:
\begin{itemize}

\item 
    Extensions of the result to more general traffic patterns when only a few ports are saturated is an open problem.

\item We believe that one may be also be able to allow correlations across time slots by making an assumption similar to the assumption in Section II.C of \cite{ErySriPer_05}, and considering the drift of the Lyapunov function over multiple time slots. This extension may require a bit of additional work.

\item A Brownian limit has been established in the heavy-traffic regime in \cite{kang2009diffusion}, but a characterization of the behavior of this limit in steady-state is not known. We expect the mean of the sum queue lengths (multiplied by $\epsilon$ and in the limit $\epsilon\rightarrow 0$) in steady-state that we have derived to be equal to the sum of the steady-state expectations of the components of the Brownian motion in \cite{kang2009diffusion}. This would be interesting to verify.

\item Verifying whether the MaxWeight algorithm achieves the optimal queue-length scaling in the size of the switch in non-heavy-traffic regimes is still an open problem.

\end{itemize}

\section*{Acknowledgment} The work presented here was supported in part by NSF Grant ECCS-1202065.


\bibliographystyle{IEEEtran}
\bibliography{references}

\appendix
\section{Proof of Lemma \ref{lem:Bertsimas}} \label{sec:Bertsimas_proof}
\begin{proof}
Lemma \ref{lem:Hajek} is applicable here and so we have that $\bE[Z\left(\overline{X}\right)]<\infty$.
Recall that $\Delta Z(X)$ is a random variable for any $X$, so define
\ba
\widetilde{D} \triangleq \sup_{X\in \mathcal{X}} \text{ess\,sup} |\Delta Z(X)| = \sup_{X,X'\in\mathcal{X},  \bP(X(t+1)=X'|X(t)=X)>0} |Z(X')-Z(X)|
\ea
Also define
\ba
p_{\max} = \sup_{X\in \mathcal{X}} \bP(X(t+1)>X|X(t)=X)
\ea
Then, from Theorem 1 in \cite{bertsimas_momentbound}, we have
\ba
\bP\left(Z\left(\overline{X}\right)>\kappa+2\widetilde{D}m\right)\leq \left(\frac{\widetilde{D}p_{\max}}{\widetilde{D}p_{\max}+\eta} \right)^{m+1}.\ea
Clearly, $\widetilde{D} \leq D$ and $p_{max}\leq 1$. Therefore, we get
\ba
\bP\left(Z\left(\overline{X}\right)>\kappa+2Dm\right)\leq & \bP\left(Z\left(\overline{X}\right)>\kappa+2\widetilde{D}m\right) \\ \leq &\left(\frac{\widetilde{D}p_{\max}}{\widetilde{D}p_{\max}+\eta} \right)^{m+1}\\
\leq & \left(\frac{D}{D+\eta} \right)^{m+1},\ea
where the last inequality follows from $\widetilde{D}p_{\max} \leq D$ and $m+1 \geq 1$. This proves the first part of the lemma. We will now use this result to obtain moment bounds. Since $r>0$ and $Z(.)\geq0$, we have 
\ba
\bE[Z\left(\overline{X}\right)^r] =& r\int_{t=0}^{\infty} t^{r-1}\bP \left(Z\left(\overline{X}\right) >t\right)dt\\
=&r\int_{t=0}^{\kappa} t^{r-1}\bP \left(Z\left(\overline{X}\right) >t\right)dt +r\int_{t=\kappa}^{\infty} t^{r-1}\bP \left(Z\left(\overline{X}\right) >t\right)dt\\
\leq & r\int_{t=0}^{\kappa} t^{r-1}dt +r\sum_{m=0}^{\infty}\int_{t=\kappa+2Dm}^{\kappa+2D(m+1)} t^{r-1}\bP \left(Z\left(\overline{X}\right) >t\right)dt\\
\leq & \kappa^r +r\sum_{m=0}^{\infty}\int_{t=\kappa+2Dm}^{\kappa+2D(m+1)} t^{r-1}\bP \left(Z\left(\overline{X}\right) >\kappa+2Dm\right)dt\\
\leq & \kappa^r +\sum_{m=0}^{\infty} \left(\frac{D}{D+\eta} \right)^{m+1} \int_{t=\kappa+2Dm}^{\kappa+2D(m+1)} rt^{r-1}dt\\
= & \kappa^r +\sum_{m=0}^{\infty} \left(\frac{D}{D+\eta} \right)^{m+1} \left(\kappa+2D(m+1)\right)^r- \left(\kappa+2Dm\right)^r\\
= & \kappa^r\left(1-\frac{D}{D+\eta} \right) +\sum_{m=1}^{\infty} \left(\kappa+2Dm\right)^r \left[\left(\frac{D}{D+\eta} \right)^{m} -\left(\frac{D}{D+\eta} \right)^{m+1} \right]
\\
= & \left(\frac{\eta}{D+\eta} \right)  \left[
\sum_{m=0}^{\infty} \left(\kappa+2Dm\right)^r \left(\frac{D}{D+\eta} \right)^{m}
\right]\\
\stackrel{(a)}{\leq} & \left(\frac{\eta}{D+\eta} \right)  \left[
\sum_{m=0}^{\infty} \left(2\kappa\right)^r \left(\frac{D}{D+\eta} \right)^{m}
+\sum_{m=0}^{\infty} \left(4Dm\right)^r \left(\frac{D}{D+\eta} \right)^{m}
\right]\\
= &
\left(2\kappa\right)^r+
 \left(4D\right)^r \left(\frac{\eta}{D+\eta} \right)\sum_{m=0}^{\infty} m^r \left(\frac{D}{D+\eta} \right)^{m},
\ea
where (a) follows from $(a+b)^r\leq 2^r\max(a,b)^r\leq 2^r(a^r+b^r)$.
It is known \cite{comtet_combinatorics} that for $x<1$ and $r=1,2,\ldots $ 
$\sum_{m=0}^{\infty} m^r x^{m}= \frac{1}{(1-x)^{r+1}} \sum_{k=0}^{r-1} A(r,k) x^{k+1}$ where $A(r,k)$ are called the Eulerian numbers. It is also known that $\sum_{k=0}^{r-1} A(r,k)=r!$. Therefore, when $x<1$, we have $\sum_{m=0}^{\infty} m^r x^{m} \leq  \frac{1}{(1-x)^{r+1}} r!$. Using this relation, we get
\ba
\bE[Z\left(\overline{X}\right)^r]
\leq &
\left(2\kappa\right)^r+
 \left(4D\right)^r \left(\frac{D+\eta}{\eta} \right)^r r!.
\ea
\end{proof}

\section{Proof of Lemma \ref{lem:V_bounded}}\label{sec:V_bounded_proof}
\begin{proof}
We will use Lemma \ref{lem:Hajek} to first show that $\bE[\V(\bvq)]$ is finite.
Define the Lyapunov function $\Wq \triangleq \|\vq\| =\sqrt{\Vq}$, and its drift
\ba
\Delta \Wq \triangleq & [\W(\vq(t+1))-\W(\vq(t))]\>\mathcal{I}(\vq(t)=\vq)
\ea

We will first verify condition \ref{cond:C2} of Lemma \ref{lem:Hajek}. Using the same arguments as in ( \ref{eq:delta_Wperp}), we get
\ba
| \Delta \Wq| =& |\|\vq(t+1)\|-\|\vq(t)\||\>\mathcal{I}(\vq(t)=\vq) \nonumber\\
\leq & \|\vq(t+1)-\vq(t)\|\>\mathcal{I}(\vq(t)=\vq) \nonumber\\
\leq &n^2 \max\lij|\qij(t+1)-\qij(t)|\>\mathcal{I}(\vq(t)=\vq) \nonumber\\
\leq &n^2 a_{\max} ,
\ea
thus verifying condition \ref{cond:C2}. We will now verify condition \ref{cond:C1}
\ba
\bEq{\Delta\Wq} = &\bEq{\|\vq(t+1)\|-\|\vq(t)\|} \nonumber\\
=& \bEq{\sqrt{\|\vq(t+1)\|^2}-\sqrt{\|\vq(t)\|^2}}\\
\stackrel{(a)}{\leq}& \bEq{\frac{1}{2\|\vq(t)\|}\|\vq(t+1)\|^2-\|\vq(t)\|^2}\\
=&\frac{1}{2\|\vq\|}\bEq{\Delta\Vq}\\
\stackrel{(b)}{\leq} &  \frac{1}{2\|\vq\|} \left(
\|\vlam\|^2+\|\vsig\|^2+n +2\La\vq,\vlam-\bEq{\vs(t)}\Ra
\right) \\
\stackrel{(c)}{\leq} &  \frac{1}{2\|\vq\|} \left(
\|\vlam\|^2+\|\vsig\|^2+n +2\min_{\bf{r}\in \cC}\La\vq,\vlam-\bf{r}\Ra
\right) \\
\stackrel{(d)}{\leq} &  \frac{1}{2\|\vq\|} \left(
\|\vlam\|^2+\|\vsig\|^2+n +2\La\vq,\vlam-(\vlam+\epsilon_1\vone)\Ra
\right) \\
= &  \frac{\|\vlam\|^2+\|\vsig\|^2+n}{2\|\vq\|} - \epsilon_1 \frac{\|\vq\|_1}{\|\vq\|} \\
\stackrel{(e)}{\leq} &  \frac{\|\vlam\|^2+\|\vsig\|^2+n}{2\|\vq\|} - \epsilon_1 \\
\leq & - \frac{\epsilon_1}{2} \text{ for all } \vq \text{ such that } \Wq\geq \frac{\|\vlam\|^2+\|\vsig\|^2+n}{\epsilon_1}
\ea
where \vsig denotes the variance vector and  $\|\vq\|_1\triangleq \sum\lij\qij$ denotes the $\ell_1$ norm of $\vq$.
Inequality (a) follows from the concavity of square root function, due to which we have that $\sqrt{y}-\sqrt{x}\leq \frac{1}{2\sqrt{x}}(y-x)$. Inequality (b) follows from the bound on drift of $\V(.)$ obtained in (\ref{eq:delta_V}) in the proof of the proof of Proposition \ref{prop:SSC}; (c) follows from the fact that we use MaxWeight scheduling. Since $\vlam\in int(\cC)$, there exists a $\epsilon_1>0$ such that $\vlam+\epsilon_1\vone \in \cC$. This gives (d).
 For any vector $\vx$, its $\ell_1$ norm is at least its $\ell_2$ norm , i.e., $\|\vx\|_1\geq \|\vx\|$. This gives inequality (e). Thus, condition \ref{cond:C1} is verified and we have that all moments of $\W(\bvq)$ exist in steady state. In particular, we have that $\bE[\V(\bvq)]$ is finite.

Now, note that
\ba
\Vtq = \left(\sum\lij \qij \right)^2\leq \left(\sum\lij \max\lij \qij \right)^2 = n^4 \max\lij \qij^2 \leq n^4 \sum\lij \qij^2 = n^4 \Vq.
\ea
Thus, $\bE[\Vt(\bvq)]$ is also finite. The lemma follows by noting that $\Vi(\vq) \leq \Vt(\vq)$ and  $\Vo(\vq) \leq \Vt(\vq)$.
\end{proof}

\end{document}